\numberwithin{equation}{section}
\numberwithin{figure}{section}
\numberwithin{table}{section}
\def\bZ{{\mathbb Z}}
\def\bR{{\mathbb R}}
\def\sE{{\mathscr E}}
\def\sF{{\mathscr F}}
\def\sA{{\mathscr A}}
\def\sG{{\mathscr G}}
\def\bs{\mathbf{s}}
\def\sS{\mathscr{S}}
\def\fm{\mathfrak{m}}
\def\${|\!|\!|}
\def\l|{\left|\!\left|\!\left|}
\def\r|{\right|\!\right|\!\right|}
\newtheorem{theorem}{Theorem}[section]
\newtheorem{lemma}[theorem]{Lemma}
\newtheorem{proposition}[theorem]{Proposition}
\newtheorem{corollary}[theorem]{Corollary}
\theoremstyle{definition}
\newtheorem{definition}[theorem]{Definition}
\theoremstyle{remark}
\newtheorem{remark}[theorem]{Remark}
\numberwithin{equation}{section}
\begin{document}

\title[Fukushima subspaces of quasidiffusions]{Fukushima subspaces of quasidiffusions}

\author{Liping Li}
\address{Fudan University, Shanghai, China.  }
\address{Bielefeld University,  Bielefeld, Germany.}
\email{liliping@fudan.edu.cn}
\thanks{The first named author is a member of LMNS,  Fudan University.  He is also partially supported by NSFC (No.  11931004) and Alexander von Humboldt Foundation in Germany. }

\author{Jiangang Ying}
\address{Fudan University, Shanghai, China.  }
\email{jgying@fudan.edu.cn}

\subjclass[2010]{Primary 31C25, 60J35,  60J45.}



\keywords{Dirichlet forms,  Fukushima subspaces,  Quasidiffusions,  Skip-free Hunt processes}

\begin{abstract}
Quasidiffusions are,  by definition, time-changed Brownian motions on certain closed subset of $\mathbb{R}$.  They admit an explicit representation of Dirichlet forms in terms of so-called speed measures.  The Fukushima subspace of a Dirichlet form means another regular Dichichlet form on the same state space but having a smaller Dirichlet space.  In this paper we aim to solve the problem of Fukushima subspaces for quasidiffusions.  The main result obtains all Fukushima subspaces and characterizes their structures.  In addition,  we will also give criteria for the uniqueness of Fukushima subspaces and the existence of minimal Fukushima subspace. 

\end{abstract}

\maketitle
\tableofcontents

\section{Introduction}

Let $E$ be a locally compact separable metric space and $\fm$ be a fully supported Radon measure on $E$.  Further let $(\sE,\sF)$ and $(\sE',\sF')$ be two regular Dirichlet forms on $L^2(E,\fm)$.  (The terminologies and notations concerning Dirichlet forms are referred to in,  e.g.,  \cite{FOT11, CF12}.) We call $(\sE',\sF')$ a \emph{Fukushima subspace} of $(\sE,\sF)$ if 
\[
	\sF'\subset \sF,\quad \sE'(f,g)=\sE(f,g),\;\forall f,g\in \sF'.  
\]
In addition $(\sE',\sF')$ is called a \emph{proper} Fukushima subspace of $(\sE,\sF)$ if further $\sF'\neq \sF$.  
(Accordingly $(\sE,\sF)$ is called a \emph{Fukushima extension} of $(\sE',\sF')$; see \cite{HY22}.) The main problems concerning it are the existence of proper Fukushima subspaces and how to characterize them if exist. 

The concept of Fukushima subspace,  originally termed as \emph{regular subspace} or \emph{regular Dirichlet subspace},  was raised by Fang et al.  in \cite{FFY05}.  They investigated the problem of Fukushima subspaces for one-dimensional Brownian motion,  and the result shows that it admits infinitely many proper Fukushima subspaces and each one can be expressed in terms of certain \emph{scale function} (see Corollary~\ref{COR41}~(2)).  Afterwards Li et al.  completely solved the problem of Fukukushima subspaces and extensions for local Dirichlet forms on an interval in a series of works including \cite{LY17,  LY19,  LY19-2,  LSY20}.  To be precise,  all local Dirichlet forms on an interval have infinitely many Fukushima subspaces and extensions,  and each Fukushima subspace or extension can be represented explicitly.  

In this paper we aim to solve the problem of Fukushima subspaces for \emph{quasidiffusions}.  Quasidiffusion is by definition a \emph{time-changed Brownian motion} on certain closed subset of $\mathbb{R}$.  At a heuristic level it may be thought of as the \emph{trace} of Brownian  motion on the closed set.  This process has been widely studied in the literatures such as \cite{BK87,  KW82,  K86} and the references thereof.  
We take it into consideration because quasidiffusion is quite similar to diffusion process,  the main object of previous researches of Fukushima subspaces in,  e.g., \cite{LY17,  LY19,  LY19-2,  LSY20}.  It satisfies many properties of diffusion process like \emph{regular property} (see Definition~\ref{DEF51}) and also admits a characterization in terms of so-called \emph{scale function} and \emph{speed measure} (see \S\ref{SEC23}).  Nevertheless,  the sample paths of quasidiffusion are not continuous and may enjoy jumps between the endpoints of ``gaps" of state space.  This is usually called the \emph{skip-free property}.  By virtue of a representation theorem for so-called \emph{skip-free Hunt process},  a generalization of quasidiffusion,  initialized in \cite{L23b},  the main result Theorem~\ref{THM32} in the current paper successfully obtains all Fukushima subspaces of a quasidiffusion.   Every Fukushima subspace is a skip-free Hunt process that can be uniquely determined by a scale function belonging to a special family \eqref{eq:38-2}.  Examining \cite{FFY05},  one may find that \eqref{eq:38-2} is exactly an analogue of which characterizes all Fukushima subspaces of Brownian motion.  Closer connection between Fukushima subspaces of quaisidiffusion and those of Brownian motion will be put forward in \S\ref{SEC4}.  

Another underlying reason that we set up this work is to investigate which role the jumping part of a Dirichlet form plays in obtaining Fukushima subspaces.  The celebrated Beurling-Deny theorem (see \cite[Theorem~3.2.1]{FOT11}) decomposes every regular Dirichlet form into three parts: \emph{strongly local part},  \emph{jumping part} and \emph{killing part}.  They correspond to diffusion,  jump and killing (i.e.  jump to the ceremony) of sample paths of associated Hunt process respectively.  When considering the problem of Fukushima subspaces,  the killing part can be always ignored with the help of \emph{resurrection},  as studied in \cite[Proposition~2.13]{LY15},  and hence makes no sense.  (This is also the reason why we lose no generality to assume (QK) in \S\ref{SEC21}.) The strongly local part does make sense by examining the researches for one-dimensional case in \cite{FFY05} as well as for multi-dimensional cases in \cite{LY15,  LY16}.  However the role of jumping part seems more complicated.  As a typical example,  symmetric compound Poisson process turns to admit no proper Fukushima subspaces,  as shown in \cite[Corollary~1]{LY17b}.  Additionally the jumping part of compound Poisson type in the decomposition of a Dirichlet form can be also ignored with the help of a similar transformation to resurrection;  see \cite{LY17b}.  In other words,  jumps of compound Poisson type make no sense for obtaining Fukushima subspaces.  Due to this example and a result in \cite{LY15} stating that Fukushima subspaces must enjoy the same jumping measure as the original Dirichlet form,  we conjecture at the very start that like killing part,  jumping part can be always ignored for the problem of Fukushima subspaces.  Unfortunately this is not true.  A recent work \cite{QYZ22} disproved this conjecture because every symmetric $\alpha$-stable process on $\bR$ with $1\leq \alpha<2$ is shown to admit proper Fukushima subspaces.  (In fact,  another counterexample had been appeared earlier in \cite[Corollary~5.1]{LY19}.) At this point the role of jumping part seems variable: Simple jumps like compound Poisson processes make no sense,  while complex jumps like $\alpha$-stable  processes with $1\leq \alpha<2$ do make sense.  Regarding $\alpha$-stable processes,  $\alpha$ is a suitable parameter with the critical value $\alpha_0:=1$ that measures the complexity of jumps: They admit no proper Fukushima subspaces whenever $\alpha<\alpha_0$.  We are curious about how to measure the complexity of general jumps for the problem of Fukushima subspaces.  As a preparatory try,  quasidiffusion is a good example to examine because its jumps are more complex that compound Poisson jumps,  and the skip-free property makes them more comprehensible than stable jumps.  
The section \S\ref{SEC5} is devoted to figuring out the role of jumping part of quasidiffusion in obtaining Fukushima subspaces.  The conclusion surprises us because it is indicated that every Fukushima subspace is obtained by adjusting only the strongly local part of original Dirichlet form.  Whenever the strongly local part vanishes,  a quasidiffusion admits no proper Fukushima subspaces; see Theorem~\ref{THM54}.  In other words,  jumps of quasidiffusion belong to the class of compound Poisson jumps and $\alpha$-stable jumps with $\alpha<1$ for the problem of Fukushima subspaces.  

The rest of this paper is organized as follows.  In \S\ref{SEC2} we will review some basic facts about quasidiffusions as well as skip-free Hunt processes.  The section \S\ref{SEC3} is devoted to characterizing all Fukushima subspaces of quasidiffusions.  Then in \S\ref{SEC4} we will  investigate the connection beween Fukushima subspaces of quasidiffusion and those of Brownian motion.  In \S\ref{SEC5} we turn to explore the structure of Fukushima subspaces of a quasidiffusion.  Particularly it admits no proper Fukushima subspaces,  if and only if the state space is of zero Lebesgue measure.  It admits a unique minimal Fukushima subspace,  if and only if the state space is nowhere dense.  Finally we extend all results for quasidiffusions to those for skip-free Hunt processes in \S\ref{SEC6}.  


\subsection*{Notations}
Let $\overline{\mathbb{R}}=[-\infty, \infty]$ be the extended real number system.  A set $E\subset \overline{\bR}$ is called a \emph{nearly closed subset} of $\overline{\bR}$ if $\overline E:= E\cup \{l,r\}$ is a closed subset of $\overline{\bR}$ where $l=\inf\{x: x\in E\}$ and $r=\sup\{x: x\in E\}$.  The point $l$ or $r$ is called the left or right endpoint of $E$.  Denote by $\overline{\mathscr K}$ the family of all nearly closed subsets of $\overline{\bR}$.  Set
\[
	\mathscr K:=\{E\in \overline{\mathscr K}: E\subset \bR\},
\]
and every $E\in \mathscr K$ is called a \emph{nearly closed subset} of $\bR$.  

Let $E$ be a locally compact separable metric space.  We denote by $C(E)$ the space of all real continuous functions on $E$.  In addition,  $C_c(E)$ is the subspace of $C(E)$ consisting of all continuous functions on $E$ with compact support, and
\[
	C_\infty(E):=\{f\in C(E): \forall \varepsilon>0, \exists K\text{ compact},  |f(x)|<\varepsilon, \forall x\in E\setminus K\}.  
\]
The functions in $C_\infty(E)$ are said to be vanishing at infinity.  
Given an interval $I$,  $C_c^\infty(I)$ is the family of all smooth functions with compact support on $I$.  


\section{Review of quasidiffusions}\label{SEC2}

In this section we will review some facts about quasidiffusions,  \emph{skip-free Hunt processes} and their correspondence.  More details are referred to in \cite{L23b}.  

\subsection{Quasidiffusion}\label{SEC21}

Let $m$ be an extended real valued, right continuous,  (not necessarily strictly) increasing and non-constant function on $\bR$.  Set $m(\pm\infty):=\lim_{x\rightarrow \pm\infty}m(x)$.   Put 
\begin{equation}\label{eq:51-2}
\begin{aligned}
	&l_0:=\inf\{x\in \bR: m(x)>-\infty\},\quad r_0:=\sup\{x\in \bR: m(x)<\infty\},  \\
	&l:=\inf\{x>l_0: m(x)>m(l_0)\},\quad r:=\sup\{x<r_0: m(x)<m(r_0-)\}.  
\end{aligned}\end{equation}
To avoid trivial case assume that $l<r$.  Define
\[
E_m:=\{x\in [l,r]\cap (l_0,r_0): \exists \varepsilon>0\text{ s.t. }m(x-\varepsilon)<m(x+\varepsilon)\}.  
\]
One may verify that $E_m\in \mathscr K$ ended by $l$ and $r$; see also \cite[Lemma~3.1]{L23b}. The function $m$ corresponds to a measure on $\bR$,  still denoted by $m$ if no confusions caused.  

Let $W=(W_t, \mathscr{F}^W_t, \mathbf{P}_x)$ be a Brownian motion on $\bR$ and $\ell^W(t,x)$ be its local time normalized such that for any bounded Borel measurable function $f$ on $\bR$ and $t\geq 0$, 
\[
	\int_0^t f(W_s)ds=2\int_\bR \ell^W(t,x)f(x)dx.  
\]
Define $S_t:=\int_{\bR} \ell^W(t,x)m(dx)$ for $t\geq 0$ and
\[
	T_t:=\inf\{u>0: S_u>t\},\quad t\geq 0.  
\]
Then $T=(T_t)_{t\geq 0}$ is a strictly increasing (before $\zeta$ defined as below), right continuous family of $\sF^W_t$-stopping times with $T_0=0$,  $\mathbf{P}_x$-a.s.   Define
\[
	\sF_t:=\sF^W_{T_t},\quad \zeta:=\inf\{t>0: W_{T_t}\notin (l_0,r_0)\},\quad X_t:=W_{T_t}, \; 0\leq t<\zeta.  
\]
Then $\{X_t, \sF_t, (\mathbf{P}_x)_{x\in E_m},\zeta\}$,  called \emph{a quasidiffusion with speed measure $m$},  is a standard process with state space $E_m$ and lifetime $\zeta$; see \cite{K86}.   The definition of standard process is referred to in,  e.g.,  \cite{BG68}.  

Throughout this paper we will assume the following condition to bar the possibility of killing inside for quasidiffusions:
\begin{itemize}
\item[(QK)] If $l_0>-\infty$ (resp.  $r_0<\infty$),  then $l=l_0$  (resp.  $r=r_0$).  
\end{itemize}
More explanations are referred to in \cite[\S3]{L23b}.  

\subsection{Skip-free Hunt process}\label{SEC22}

Now fix $E\in \overline{\mathscr K}$ ended by $l$ and $r$.  We add a ceremony $\partial$ to $E$ and define $E_\partial:=E\cup \{\partial\}$.  More precisely,  $\partial$ is an additional isolated point when $E=\overline{E}$.  When $E=\overline E\setminus \{l\}$ or $E=\overline{E}\setminus \{r\}$,  $\partial$ is identified with $l$ or $r$.  When $E=\overline{E}\setminus \{l,r\}$,  $E_\partial$ is the one-point compactification of $E$. 

Let 
\[
X=\left\{\Omega,  \mathscr F_t,  \theta_t,  X_t,  \mathbf{P}_x,  \zeta\right\}
\]
be a \emph{Hunt process} on $E_\partial$,  where $\{\mathscr F_t\}_{t\in [0,\infty]}$ is the minimum augmented admissible filtration and $\zeta=\inf\{t>0: X_t=\partial\}$ is the lifetime of $X$.  
The other notations and terminologies are standard and we refer readers to,  e.g.,  \cite[Appendix~A]{CF12}. 

\begin{definition}\label{DEF51}
Let $X$ be a Hunt process on $E_\partial$.  Then $X$ is called a \emph{skip-free Hunt process} if the following are satisfied:
\begin{itemize}
\item[(SF)] \emph{Skip-free property}: $(X_{t-} \wedge X_t,  X_{t-}\vee X_t)\cap E=\emptyset$ for any $t<\zeta$,  $\mathbf{P}_x$-a.s.  and all $x\in E$.   
\item[(SR)] \emph{Regular property}: $\mathbf{P}_x(T_y<\infty)>0$ for any $x, y\in E$,  where $T_y:=\inf\{t>0: X_t=y\}$ ($\inf \emptyset:=\infty$).   
\item[(SK)] There is \emph{no killing inside} in the sense that if $\mathbf{P}_x(\zeta<\infty)>0$ for $x\in E$,  then $l$ or $r$ does not belong to $E$ and $\mathbf{P}_x(X_{\zeta-}\notin E,  \zeta<\infty)=\mathbf{P}_x(\zeta<\infty)$.  
\end{itemize}
\end{definition}

Every skip-free Hunt process admits a continuous and strictly increasing  real valued function $\bs$ on $E$,  called \emph{scale function},  and a fully supported Radon measure $\mu$ on $E$,  called \emph{speed measure}; see \cite[\S2]{L23b}.   The pair $(\bs,\mu)$ is unique up to an affine transformation.  On the contrary,  given such a pair $(\bs,\mu)$,  there exists a unique skip-free Hunt process on $E_\partial$ whose scale function is $\bs$ and speed measure is $\mu$; see \cite[Corollary~4.4]{L23b}.  

Denote by $I:=\langle l,r\rangle$ the interval ended by $l$ and $r$ such that $l\in I$ (resp.  $r\in I$) if and only if $l\in E$ (resp.  $r\in E$).  Clearly $I\setminus E$ is an open set,  and hence we may write it as a union of disjoint open intervals:
\begin{equation}\label{eq:22}
	I\setminus E=\cup_{k\geq 1} (a_k,b_k).  
\end{equation}
We extend a scale function $\bs$ on $E$ to another function $\bar{\bs}$ on $[l,r]$,  called the \emph{extended scale function},  as follows:  
\begin{equation}\label{eq:24}
	\bar{\bs}(x):=\bs(a_k)+\frac{\bs(b_k)-\bs(a_k)}{b_k-a_k}\cdot (x-a_k),\quad x\in (a_k,b_k), \;k\geq 1,
\end{equation}
and 
\[
\bar{\bs}(l):=\bs(l)=\lim_{x\downarrow l}\bs(x),\quad \bar{\bs}(r):=\bs(r)=\lim_{x\uparrow r}\bs(x).
\]	
Then Lebesgue-Stietjes measure of $\bar{\bs}$, denoted by $\lambda_{\bar{\bs}}$ or $d\bar{\bs}$,  is Radon on $I$.  Further denote by $\lambda_\bs$ or $d\bs$ the restriction of $\lambda_{\bar{\bs}}$ to $E$.  
Given $f\in C(I)$,  $f\ll \bar{\bs}$ means that $f$ is absolutely continuous with respect to $\bar{\bs}$,  i.e.  there exists an absolutely continuous function $g$ on $\bar{\bs}(I):=\{\bar{\bs}(x): x\in I\}$ such that $f=g\circ \bar{\bs}$.  Meanwhile $df/d\bar{\bs}:=g'\circ \bar{\bs}$.   Put a family of continuous functions
\[
	H^1_{e,\bs}(E):=\left\{f|_{E}: f\in C(I),  f\ll \bar{\bs},  \frac{df}{d\bar{\bs}}\in L^2(I,  \lambda_{\bar{\bs}})\right\}.  
\]
In addition for $h=f|_E\in H^1_{e,\bs}(E)$, we make the convention
\[
\frac{dh}{d\bs}:=\frac{df}{d\bar{\bs}}\bigg|_E
\]
for convenience.  Note that every $h\in  H^1_{e,\bs}(E)$ admits a finite limit $h(j):=\lim_{x\rightarrow j}h(x)$ whenever $|\bs(j)|<\infty$ for $j=l$ or $r$ (even if $j\notin E$).  The representation of Dirichlet form associated to a skip-free Hunt process is obtained in \cite[Theorem~5.3]{L23b}.  

\begin{theorem}\label{THM22}
Let $X$ be a skip-free Hunt process on $E$ whose scale function is $\bs$ and speed measure is $\mu$.  Then $X$ is symmetric with respect to $\mu$ and its associated regular Dirichlet form $(\sE,\sF)$ on $L^2(E,\mu)$ admits the following representation:
\begin{equation}\label{eq:23}
\begin{aligned}
	\sF=& \{f\in L^2(E,\mu)\cap  H^1_{e,\bs}(E): f(j)=0\\
	 &\qquad \qquad \qquad \text{whenever }j\notin E\text{ and }|\bs(j)|<\infty\text{ for }j=l\text{ or }r\},  \\
	 \sE(f,g)=&\frac{1}{2}\int_E \frac{df}{d\bs}\frac{dg}{d\bs}d\bs+\frac{1}{2}\sum_{k\geq 1}\frac{(f(b_k)-f(a_k))(g(b_k)-g(a_k))}{\bs(b_k)-\bs(a_k)},\quad f,g\in \sF.
\end{aligned}
\end{equation}
\end{theorem}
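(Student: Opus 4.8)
The plan is to realise $X$ as a time change (the \emph{trace}) of a genuine one-dimensional diffusion on the whole interval $I$, and then to read off $(\sE,\sF)$ from the transfer rules for traces of Dirichlet forms. Since the pair $(\bs,\mu)$ is determined only up to an affine change, and such a change alters neither properties (SF)--(SK) nor the asserted formula, I would first perform the scale change $x\mapsto\bs(x)$ and so assume $\bs=\mathrm{id}$, hence $E\subset I$, $\bar{\bs}(x)=x$ and $X$ in natural scale. Next I would introduce the Radon measure $\hat\mu$ on $I$ coinciding with $\mu$ on $E$ and with Lebesgue measure on $I\setminus E=\cup_k(a_k,b_k)$, and take $\hat X$ to be the diffusion on $I$ in natural scale with speed measure $\hat\mu$, with the endpoint behaviour at $j\in\{l,r\}$ chosen reflecting if $j\in E$ and killed if $j\notin E$ and $|\bs(j)|<\infty$ (an inaccessible endpoint imposing nothing). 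Classical one-dimensional theory then gives that $\hat X$ is $\hat\mu$-symmetric with regular Dirichlet form on $L^2(I,\hat\mu)$
\[
	\hat\sF=\Big\{u\in L^2(I,\hat\mu)\cap H^1_{e,\bar{\bs}}(I): u(j)=0\text{ whenever }j\notin E\text{ and }|\bs(j)|<\infty\Big\},\qquad \hat\sE(u,v)=\tfrac12\int_I\frac{du}{d\bar{\bs}}\frac{dv}{d\bar{\bs}}\,d\bar{\bs},
\]
where $H^1_{e,\bar{\bs}}(I)$ is the space introduced in \S\ref{SEC22} with $E$ replaced by the gapless $I$.

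The second step is to let $A$ be the positive continuous additive functional of $\hat X$ with Revuz measure $\mu$ — its quasi support is $E$, since $\mu$ is fully supported there — and to let $\check X$ be the time change of $\hat X$ by $A$. I would then verify that $\check X$ is a skip-free Hunt process: (SF) holds because $\hat X$ has continuous paths, so a would-be jump of $\check X$ across a point $y\in E$ would force $\hat X$ to traverse a neighbourhood of $y$ — which carries positive $\mu$-mass — in zero $A$-time, which is impossible; (SR) and (SK) descend from the corresponding facts for $\hat X$ and the chosen boundary regime. Since $\check X$ has scale function $\bar{\bs}|_E=\bs$ and speed measure $\mu$ (the Revuz measure of $A$ restricted to $E$), the uniqueness in \cite[Corollary~4.4]{L23b} gives $\check X=X$. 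In particular $X$ is $\mu$-symmetric, being a time change of the $\hat\mu$-symmetric $\hat X$ with $\hat\mu|_E=\mu$.

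The third step computes the trace form. By the time-change theory of Dirichlet forms (e.g. \cite[Theorem~6.2.1]{FOT11}, \cite[\S5.2]{CF12}), $\sF$ consists of the $f\in L^2(E,\mu)$ whose harmonic extension $\bH f$ — equal to $f$ on $E$ and affine in $\bar{\bs}$ on each gap $(a_k,b_k)$ — lies in the extended Dirichlet space of $\hat X$, and $\sE(f,g)=\hat\sE(\bH f,\bH g)$. On a gap one has $\frac{d(\bH f)}{d\bar{\bs}}\equiv \big(f(b_k)-f(a_k)\big)/\big(\bs(b_k)-\bs(a_k)\big)$, while on $E$, $\frac{d(\bH f)}{d\bar{\bs}}\big|_E=\frac{df}{d\bs}$. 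Splitting $\int_I=\int_E+\sum_k\int_{a_k}^{b_k}$ and evaluating the now trivial integrals over the gaps turns $\hat\sE(\bH f,\bH g)$ into the right-hand side of \eqref{eq:23}, and the same splitting — the summability $\sum_k (f(b_k)-f(a_k))^2/(\bs(b_k)-\bs(a_k))<\infty$ following from a Cauchy--Schwarz estimate for $f\in H^1_{e,\bs}(E)$ — converts the condition $\bH f\in\hat\sF_e$ into the stated description of $\sF$.

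I expect the main obstacle to be the identification $\check X=X$ in the second step: one must check that the ambient diffusion can be chosen so that the time change leaves the scale unchanged and reproduces exactly $\mu$, and that every boundary regime at $l,r$ allowed by (SF)--(SK) is realised by a legitimate boundary condition for $\hat X$; the possible accumulation of the gaps and the cases $|\bs(l)|=\infty$ or $|\bs(r)|=\infty$ need separate care. Once $\check X=X$ is secured, the remaining computations are routine. Alternatively one could proceed without an ambient diffusion: compute the $\alpha$-order resolvent of $X$ directly from hitting probabilities expressed through $\bs$ and $\mu$, as for a classical gap diffusion, verify $\mu$-symmetry of the resulting Green kernel, and recover $(\sE,\sF)$ from the identity $\sE_\alpha(G_\alpha f,g)=(f,g)_{\mu}$.
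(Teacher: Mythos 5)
The paper itself offers no proof of Theorem~\ref{THM22}: it is quoted verbatim from \cite[Theorem~5.3]{L23b}, so there is nothing internal to compare your argument against line by line. That said, your route --- reduce to natural scale, embed $X$ as the time change of an ambient one-dimensional diffusion on $I$ by the PCAF with Revuz measure $\mu$, and read off $(\sE,\sF)$ from the trace theorem with the affine-on-gaps harmonic extension --- is exactly the picture this paper itself records after the fact: \S\ref{SEC23} identifies quasidiffusions with time-changed Brownian motions, and Corollary~\ref{COR41}(3) states that $(\sE,\sF)$ is the trace of the Brownian form $(\sA,\sG)$ on $L^2(E,\mu)$. So your proposal is best described as a reconstruction of the cited result by its natural proof, and the strategy is sound: the (SF)/(SR)/(SK) checks for the time-changed process, the computation $\hat\sE(\bH f,\bH g)=\frac12\int_E\frac{df}{d\bs}\frac{dg}{d\bs}d\bs+\frac12\sum_k\frac{(f(b_k)-f(a_k))(g(b_k)-g(a_k))}{\bs(b_k)-\bs(a_k)}$ from the gap-wise constant derivative, and the Cauchy--Schwarz comparison showing the affine interpolant minimizes energy among extensions in $H^1_{e,\bs}$ are all correct as sketched. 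Two caveats. First, your identification $\check X=X$ leans on the uniqueness statement \cite[Corollary~4.4]{L23b}, which is itself part of the machinery being established in that reference; a self-contained proof would have to argue uniqueness directly (e.g.\ via the resolvent/hitting-probability computation you mention as an alternative), and you should be explicit that you are importing it. Second, since the theorem is stated for $E\in\overline{\mathscr K}$, the endpoints may be $\pm\infty$ and may belong to $E$; the dichotomy ``reflecting if $j\in E$, killed if $j\notin E$ and $|\bs(j)|<\infty$'' must then be run through the full Feller boundary classification (approachability requires finiteness of $\bs$ near $j$ together with an integrability condition on $\mu\,d\bs$), which you flag but do not carry out. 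Neither point is a wrong turn; both are the places where the detailed work of \cite{L23b} actually lives.
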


\subsection{Correspondence between quasidiffusions and skip-free Hunt processes}\label{SEC23}

The main result, Theorem~4.1,  of \cite{L23b} investigates the correspondence between quasidiffusion and skip-free Hunt process: $X$ is a quasidiffusion with speed measure $m$ satisfying (QK),  if and only if it is a skip-free Hunt process on $E=E_m$ on its natural scale,  i.e.  $\bs(x):=x$ for $x\in E$,  whose speed measure is $\mu=\frac{1}{2}m|_E$.  
Particularly,  for a general skip-free Hunt process on $E\in \overline{\mathscr K}$ with scale function $\bs$ and speed measure $\mu$,  $\tilde{X}_t:=\bs(X_t),  t\geq 0,$ forms a skip-free Hunt process on $\tilde{E}:=\bs(E)=\{\bs(x): x\in E\}\in \mathscr K$ on its natural scale whose speed measure is $\tilde{\mu}:=\mu \circ \bs^{-1}$.  Actually $\tilde{X}=(\tilde{X}_t)_{t\geq 0}$ is a quasidiffusion on $\tilde{E}$.  

\section{Fukushima subspaces of quasidiffusions}\label{SEC3}

From now on fix $E\in \mathscr K$ ended by $l$ and $r$,  and let $X=(X_t)_{t\geq 0}$ be a quasidiffusion on $E$.  Denote by $I$ the interval ended by $l$ and $r$ such that $l\in I$ (resp.  $r\in I$) if and only if $l\in E$ (resp.  $r\in E$).  The open set $I\setminus E$ is still written as \eqref{eq:22}.   In view of \S\ref{SEC23},  $X$ is a skip-free Hunt process on its natural scale.  Denote its corresponding speed measure by $\mu$.  Due to Theorem~\ref{THM22},  the associated regular Dirichlet form of $X$ on $L^2(E,\mu)$ is
\begin{equation}\label{eq:31}
\begin{aligned}
	\sF=& \{f\in L^2(E,\mu)\cap  H^1_{e}(E): f(j)=0\\
	 &\qquad \qquad \qquad \text{whenever }j\notin E\text{ and }|j|<\infty\text{ for }j=l\text{ or }r\},  \\
	 \sE(f,g)=&\frac{1}{2}\int_E f'(x)g'(x)dx+\frac{1}{2}\sum_{k\geq 1}\frac{(f(b_k)-f(a_k))(g(b_k)-g(a_k))}{b_k-a_k},\quad f,g\in \sF,
\end{aligned}
\end{equation}
where $H^1_e(E):=\{f|_E: f\in H^1_e((l,r))\}$ with
\[
H^1_e((l,r)):=\{f: f\text{ is absolutely continuous on }(l,r)\text{ and }f'\in L^2((l,r))\};
\]  
see also \cite[Theorems~3.4 and 3.6]{L23}.  

In this paper what we are concerned with is the \emph{Fukushima subspaces} of $(\sE,\sF)$.  A Fukushima subspace means another regular Dirichlet form $(\sE',\sF')$ on $L^2(E,\mu)$ such that 
\begin{equation}\label{eq:33}
	\sF'\subset \sF,\quad \sE'(f,g)=\sE(f,g),\; \forall f,g\in \sF'.  
\end{equation}
A Fukushima subspace is called a proper one,  if $\sF'\neq \sF$.  
The task is to find out all Fukushima subspaces of $(\sE,\sF)$ and to obtain their representation.  Since $(\sE',\sF')$ is regular on $L^2(E,\mu)$,  it is associated to a Hunt process, denoted by $X'$,  on $E$.  We first show that $X'$ is a skip-free Hunt process on $E$.  

\begin{lemma}\label{LM31}
Let $X'$ be the Hunt process associated to a Fukushima subspace $(\sE',\sF')$ of $(\sE,\sF)$.  Then $X'$ is a skip-free Hunt process on $E$.  
\end{lemma}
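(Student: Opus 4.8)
The plan is to verify the three defining properties (SF), (SR), (SK) of Definition~\ref{DEF51} for $X'$, exploiting that $(\sE',\sF')$ is a regular Dirichlet form with $\sF'\subset\sF$ and that the jumping measure and killing measure of $(\sE',\sF')$ must coincide with those of $(\sE,\sF)$. The key structural input is the result of \cite{LY15} (also implicitly \eqref{eq:33}): a Fukushima subspace inherits the jumping measure and killing measure of the ambient form. From \eqref{eq:31} the jumping measure of $(\sE,\sF)$ is supported on the set of pairs $\{(a_k,b_k):k\ge1\}$ of endpoints of the gaps of $E$, with weight $\tfrac12(b_k-a_k)^{-1}$, and the killing measure is supported on $\{l,r\}\cap E$ at the finite endpoints (vanishing under (QK), which forbids killing inside). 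So $X'$ can only jump between paired gap-endpoints $a_k\leftrightarrow b_k$, never killed inside, which will give (SF) and (SK).

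First I would establish (SF). By the Beurling--Deny decomposition and the fact that $(\sE',\sF')$ shares the jumping measure $J$ of $(\sE,\sF)$, any jump of $X'$ from $x$ to $y$ with $x\ne y$ must have $(x,y)$ in the support of $J$, i.e.\ $\{x,y\}=\{a_k,b_k\}$ for some $k$. Since $(a_k,b_k)\cap E=\emptyset$ by \eqref{eq:22}, the open interval between any two successive positions of $X'$ meets $E$ only possibly at its own endpoints, which is exactly (SF); more carefully, one argues via the L\'evy system of $X'$ that $\mathbf{P}_x$-a.s.\ the only jumps are across gaps, so $(X_{t-}\wedge X_t, X_{t-}\vee X_t)\cap E=\emptyset$ for $t<\zeta$. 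Next (SK): since the killing measure of $(\sE',\sF')$ equals that of $(\sE,\sF)$, which vanishes in the interior under (QK), there is no killing inside for $X'$, and the lifetime can only be reached by exiting through a finite endpoint not in $E$ or by jumping to $\partial$ when $E$ is non-compact — this is precisely condition (SK), inherited verbatim from $X$.

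The main obstacle will be (SR), the regular property $\mathbf{P}_x(T_y<\infty)>0$ for all $x,y\in E$. Since $(\sE',\sF')$ is a \emph{regular} Dirichlet form, $X'$ is $\mu$-symmetric and $\mu$ has full support on $E$, so every point of $E$ is non-$\mu$-polar in a suitable sense; the delicate part is upgrading this to genuine point-to-point accessibility. I would argue as follows: the strongly local part of $(\sE',\sF')$ is again of the one-dimensional form $\tfrac12\int_E (df/d\bs')(dg/d\bs')\,d\bs'$ for some scale function $\bs'$ (this uses the classification of strongly local one-dimensional Dirichlet forms on $E$, or alternatively the representation \eqref{eq:23} applied to $X'$ once we know it is skip-free — so one may need to interleave this step with the earlier ones), and on each ``interval component'' of $E$ between consecutive gaps the process behaves like a regular diffusion, which is point-accessible; combined with the jumps across every gap $a_k\leftrightarrow b_k$ (forced by the shared jumping measure, which in particular has strictly positive mass on every gap), one can travel from any $x$ to any $y$ in $E$ with positive probability by a finite chain of diffusion-moves and gap-jumps. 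A clean way to package this is: irreducibility of $X'$ follows because $(\sE,\sF)$ is irreducible and $\sF'$ separates points of $E$ (it contains enough functions, e.g.\ by regularity $\sF'\cap C_c(E)$ is dense in $C_c(E)$), whence no non-trivial $X'$-invariant set exists; for a one-dimensional-type process irreducibility plus the regular property of the ambient $X$ forces (SR). I expect verifying this accessibility rigorously — ruling out that $X'$ gets ``trapped'' on a sub-portion of $E$ — to be the technical heart of the lemma, with (SF) and (SK) being comparatively routine consequences of the shared jumping and killing measures.
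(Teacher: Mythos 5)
Your overall architecture is close to the paper's: the paper also feeds the shared jumping and killing measures (from \cite[Theorem~2.1]{LY15}) into a structural characterization, namely that $(\sE',\sF')$ is \emph{strongly local-like} in the sense of \cite[Definition~5.1]{L23b}, adds that every singleton has positive $\sE'$-capacity (via $\mathrm{Cap}'\geq\mathrm{Cap}$), and then invokes \cite[Theorem~5.3]{L23b}, which reduces everything to proving irreducibility of $(\sE',\sF')$. So your identification of (SF) and (SK) as routine consequences of the inherited jumping/killing measures, and of (SR)/irreducibility as the technical heart, is accurate. You do omit the positive-capacity-of-singletons input, which is needed for point regularity and hence for (SR).

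The genuine gap is in your treatment of that heart. First, the claim that irreducibility of $X'$ follows ``because $(\sE,\sF)$ is irreducible and $\sF'$ separates points of $E$'' is not a valid implication: an $\sE'$-invariant set need not be $\sE$-invariant a priori, and density of $\sF'\cap C_c(E)$ in $C_c(E)$ says nothing about invariant sets. Second, your fallback picture --- travelling from $x$ to $y$ by ``a finite chain of diffusion-moves and gap-jumps'' --- collapses exactly in the cases this paper cares about: when $E$ is a Cantor-type set there are no interval components at all and infinitely many gaps separate any two points, so no finite chain exists; moreover ``the process behaves like a regular diffusion on each interval component'' presupposes the representation \eqref{eq:23} for $X'$, which you acknowledge is circular at this stage. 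The paper's actual argument is concrete and avoids both problems: for an $\sE'$-invariant set $A$ with $1_A$ quasi-continuous, it applies the identity $\sE'(f,f)=\sE'(f1_A,f1_A)+\sE'(f1_{A^c},f1_{A^c})$ to a suitable $f\in\sF'\cap C_c(E)$ equal to $1$ near a gap, and reads off from the jump part that $a_k\in A$ if and only if $b_k\in A$ for every $k$; this, together with quasi-continuity of $1_A$ for $\sE$ and the skip-freeness of the \emph{original} process $X$, makes $t\mapsto 1_A(X_t)$ continuous on $\{t<\zeta\}$, whence (by the argument of \cite[Lemma~4.6.3]{FOT11}) $A$ is $\sE$-invariant and irreducibility of $\sE$ forces $\mu(A)=0$ or $\mu(E\setminus A)=0$. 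Without some substitute for this transfer of invariant sets from $\sE'$ to $\sE$, your proof of (SR) does not go through.
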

\begin{proof}
Denote by $\text{Cap}'$ and $\text{Cap}$ the $1$-capacities of $(\sE',\sF')$ and $(\sE,\sF)$ respectively.  It is easy to verify that for any Borel set $B\subset E$,  $\text{Cap}'(B)\geq \text{Cap}(B)$;  see also \cite[Remark~1.2]{LY15}.  Since every singleton of $E$ is of positive $\sE$-capacity (see,  e.g.,  \cite[Corollary~3.7]{L23}),  it follows that every singleton of $E$ is also of positive $\sE'$-capacity.   On the other hand,  \cite[Theorem~2.1]{LY15} indicates that the killing measure $k'$ of $(\sE',\sF')$ vanishes and its jumping measure $J'$ is identified with that of $(\sE,\sF)$.  More precisely,  $J'$ is supported on $\{(a_k,b_k)_2,  (b_k,a_k)_2: k\geq 1\}$,  where $(\cdot,\cdot)_2$ stands for points in $E\times E$,  and 
\[
J((a_k,b_k)_2)=J((b_k,a_k)_2)=1/(4|b_k-a_k|).
\]  
Using this fact,  one can easily verify that $(\sE',\sF')$ is \emph{strongly local-like} in the sense of \cite[Definition~5.1]{L23b}.  On account of \cite[Theorem~5.3]{L23b},  it suffices to show that $(\sE',\sF')$ is irreducible,  i.e.  any invariant set $A$ with respect to $\sE'$ satisfies $\mu(A)=0$ or $\mu(E\setminus A)=0$.  

Now take an invariant set $A$ with respect to $\sE'$.  In view of \cite[Lemma~4.6.2]{FOT11},  we may assume that $1_A$ is $\sE'$-quasi-continuous.  We first show that for any $k\geq 1$,  
\begin{equation}\label{eq:35}
a_k\in A\text{ if and only if }b_k\in A,
\end{equation}
where $a_k$ and $b_k$ appear in \eqref{eq:22}.  Argue by contradiction and suppose $a_{1}\in A$ while $b_1\notin A$.   Take an interval $(l_1,r_1)$ in the decomposition \eqref{eq:22} such that $r_1\leq a_1$ if exists, and otherwise take $(l_1,r_1)\subset E$ with $r_1\leq a_1$.   Further take another interval $(l_2,r_2)$ in the decomposition \eqref{eq:22} such that $l_2\geq b_1$ if exists, and otherwise take $(l_2,r_2)\subset E$ with $l_2\geq b_1$.  (We ignore the case that $a_1=l\in E$ or $b_1=r\in E$ because it can be treated analogously.) Using the regularity of $(\sE',\sF')$,  one may find a non-negative function $f\in \sF'\cap C_c(E)$ such that for $x\in E$,  it holds that $f(x)=1$ for $x\in [r_1,l_2]$ and $f(x)=0$ for $x\geq r_2$ or $x\leq l_1$.  We only lead to a contradiction for the case that $(l_1,r_1)$ is an interval in the decomposition \eqref{eq:22} and $(l_2, r_2)\subset E$.  (The other cases can be treated similarly.) Since $A$ is invariant,  on account of \cite[Theorem~1.6.1]{FOT11},  $f\cdot 1_A,  f\cdot 1_{A^c}\in \sF'$ and 
\begin{equation}\label{eq:32}
	\sE'(f, f)=\sE'(f\cdot 1_A, f\cdot 1_A)+\sE'(f\cdot 1_{A^c},  f\cdot 1_{A^c}).  
\end{equation}
Since \eqref{eq:33} and $(\sE,\sF)$ admits the representation \eqref{eq:31},  it follows that \eqref{eq:32} amounts  to
\begin{equation}\label{eq:34}
\sum_{k\geq 1: a_k\in A ,b_k\notin A \text{ or }a_k\notin A, b_k\in A} \frac{f(a_k)f(b_k)}{|b_k-a_k|}=0.  
\end{equation}
Since $f$ is non-negative,  $a_1\in A, b_1\notin A$ and $f(a_1)=f(b_1)=1$,  the left hand side of \eqref{eq:34} is not less than
\[
	\frac{f(a_1)f(b_1)}{|b_1-a_1|}=\frac{1}{|b_1-a_1|}>0,
\]
as leads to a contradiction.  We eventually arrive at \eqref{eq:35}.  

Next we note that $1_A$ is also $\sE$-quasi-continuous because it is $\sE'$-quasi-continuous; see \cite[Remark~1.2]{LY15}.  Particularly $t\mapsto 1_A(X_t)$ is right continuous,  
\begin{equation}\label{eq:36}
	\lim_{t'\uparrow t}1_A(X_{t'})=1_{A}(X_{t-}),\quad \forall t<\zeta
\end{equation}
and 
\[
	\mathbf{P}_x(\lim_{t'\uparrow \zeta}1_A(X_{t'})=1_A(X_{\zeta-}), X_{\zeta-}\in E)=\mathbf{P}_x(X_{\zeta-}\in E),\quad \forall x\in E.  
\]
Actually 
\begin{equation}\label{eq:37}
t\mapsto 1_A(X_t)
\end{equation}
 is continuous on $\{t<\zeta\}$.  In fact,  if $X_{t-}=X_t$,  then \eqref{eq:36} implies that \eqref{eq:37} is continuous at $t$.  Otherwise (SF) implies that $(X_{t-},X_t)_2=(a_k,b_k)_2$ or $(b_k,a_k)_2$ for some $k$.  In view of \eqref{eq:35},  we also have the continuity of \eqref{eq:37} at $t$.  Then mimicking the proof of \cite[Lemma~4.6.3]{FOT11},  we can obtain that $A$ is an invariant set with respect to $(\sE,\sF)$.  Since $(\sE,\sF)$ is irreducible,  it follows that $\mu(A)=0$ or $\mu(E\setminus A)=0$.  Therefore the irreducibility of $(\sE',\sF')$ is concluded.  That completes the proof.  
\end{proof}

Given a scale function $\bs$ on $E$,  we always denote by $\bar{\bs}$ the extended scale function on $[l,r]$ of $\bs$.  Put a family of scale functions
\begin{equation}\label{eq:38-2}
\begin{aligned}
	\mathscr S:=&\{\bs=\bar{\bs}|_E:  \bar{\bs}\text{ is strictly increasing and absolutely continuous},  \\
	&\qquad \qquad \qquad\qquad \qquad \bar{\bs}'=0\text{ or }1,  \text{a.e.  and }\bar{\bs}'(x)=1,\forall x\in I\setminus E\},
\end{aligned}\end{equation}
where $\bar{\bs}'$ is the derivative of $\bar{\bs}$.  
Due to Lemma~\ref{LM31} and Theorem~\ref{THM22},  we know that the Fukushima subspace $(\sE',\sF')$ admits the representation \eqref{eq:23} with its scale function and its speed measure.  Since $X'$ is symmetric with respect to its speed measure and its symmetrizing measures are unique up to a multiplicative constant (see \cite[\S5]{L23b}),  we may always take $\mu$ in \eqref{eq:31} to be its speed measure.   Hence each Fukushima subspace is determined by its scale function.  The following result shows that the family of all Fukushima subspaces is in a one-to-one correspondence with $\mathscr S$.  

\begin{theorem}\label{THM32}
Let $E\in\mathscr K$ and $X$ be the quasidiffusion associated to $(\sE,\sF)$  expressed as \eqref{eq:31}.  Then $(\sE',\sF')$ is a Fukushima subspace of $(\sE,\sF)$ if and only if $(\sE',\sF')$ admits the representation \eqref{eq:23} with $\bs\in \mathscr S$ and $\mu$ in \eqref{eq:31}.  
\end{theorem}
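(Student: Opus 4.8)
The plan is to prove both implications by exploiting the representation of skip-free Hunt processes in Theorem~\ref{THM22} together with the structural facts extracted in Lemma~\ref{LM31}. By Lemma~\ref{LM31}, any Fukushima subspace $(\sE',\sF')$ is associated to a skip-free Hunt process $X'$ on $E$, so by Theorem~\ref{THM22} it admits the representation \eqref{eq:23} for some scale function $\bs$ and (after normalizing, as explained before the statement) with speed measure $\mu$ as in \eqref{eq:31}. Thus the whole theorem reduces to identifying \emph{which} scale functions $\bs$ arise, and showing this set is exactly $\mathscr S$.

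For the ``only if'' direction I would start from the defining relations \eqref{eq:33} and compare the two representations \eqref{eq:23} and \eqref{eq:31}. First, plugging functions supported near a single gap $(a_k,b_k)$ into $\sE'=\sE$ forces the jumping coefficients to agree, i.e. $\bs(b_k)-\bs(a_k)=b_k-a_k$ for every $k$; equivalently the extended scale function satisfies $\bar{\bs}(b_k)-\bar{\bs}(a_k)=b_k-a_k$, which by \eqref{eq:24} gives $\bar{\bs}'\equiv 1$ on each gap $I\setminus E$. Next, the inclusion $\sF'\subset\sF$ together with $\sE'=\sE$ on $\sF'$ says that on the strongly local part we must have $\int_E (dh/d\bs)^2\,d\bs=\int_E (h')^2\,dx$ for all $h\in\sF'$; testing with a rich enough family of local functions (cutoffs of $\bar{\bs}$ itself, or of the identity, localized between consecutive gaps, which lie in $\sF'\subset\sF$ by regularity of $(\sE',\sF')$) forces $\bar{\bs}'\in\{0,1\}$ a.e.\ on $E$. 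Here I would use that $\bar{\bs}$ is strictly increasing and absolutely continuous — strict monotonicity is automatic since $\bs$ is a scale function, and absolute continuity of $\bar{\bs}$ must be deduced from the requirement $dh/d\bs\in L^2$ for a sufficiently large supply of $h\in\sF'$, e.g.\ the local coordinate functions that must belong to $\sF$. Concluding, $\bs=\bar{\bs}|_E\in\mathscr S$.

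For the ``if'' direction, given $\bs\in\mathscr S$ let $(\sE',\sF')$ be the regular Dirichlet form \eqref{eq:23} attached (via Theorem~\ref{THM22} and the existence half of the correspondence in \S\ref{SEC23}) to the skip-free Hunt process with scale $\bs$ and speed measure $\mu$. I must check $\sF'\subset\sF$ and $\sE'=\sE$ on $\sF'$. The boundary conditions in \eqref{eq:23} match those in \eqref{eq:31} because $|\bs(j)|<\infty\iff|j|<\infty$ when $\bar{\bs}'\in\{0,1\}$ and equals $1$ on gaps; so that part is a bookkeeping check. For the inclusion: if $f=g|_E$ with $g\ll\bar{\bs}$ and $dg/d\bar{\bs}\in L^2(I,\lambda_{\bar{\bs}})$, then since $\bar{\bs}$ is absolutely continuous with $\bar{\bs}'\le 1$ a.e., $g$ is absolutely continuous on $(l,r)$ with $g'=(dg/d\bar{\bs})\,\bar{\bs}'$ a.e., whence $\int_I (g')^2\,dx=\int_I (dg/d\bar{\bs})^2(\bar{\bs}')^2\,dx\le \int_I(dg/d\bar{\bs})^2\,\bar{\bs}'\,dx$ using $0\le\bar{\bs}'\le 1$, and the right side equals $\int_I(dg/d\bar{\bs})^2\,d\lambda_{\bar{\bs}}<\infty$ (the singular part of $\lambda_{\bar{\bs}}$ vanishes as $\bar{\bs}$ is absolutely continuous); so $f\in H^1_e(E)$ and $f\in\sF$. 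Finally the identity $\sE'(f,f)=\sE(f,f)$ is checked term by term: on each gap $\bar{\bs}(b_k)-\bar{\bs}(a_k)=b_k-a_k$ so the jump sums coincide, and on $E$ we use $\bar{\bs}'\in\{0,1\}$, which gives $(\bar{\bs}')^2=\bar{\bs}'$ pointwise a.e., hence $\int_E (f')^2\,dx=\int_E (df/d\bar{\bs})^2(\bar{\bs}')^2\,dx=\int_E (df/d\bs)^2\,d\bs$, the desired equality of strongly local parts; by polarization $\sE'=\sE$ on $\sF'$.

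I expect the main obstacle to be the ``only if'' direction, specifically extracting the pointwise constraint $\bar{\bs}'\in\{0,1\}$ a.e.\ on $E$ and the absolute continuity of $\bar{\bs}$ purely from the Dirichlet-space inclusion. The clean way is to observe that $\sF'\subset\sF$ forces each bounded locally-$\bs$-coordinate function (built from $\bar{\bs}$) to have square-integrable ordinary derivative, while $\sE'=\sE$ on such functions compares $\int (d\bar{\bs}/d\bs)^2 d\bs = \int (\text{same function})'^2 dx$; since $d\bar{\bs}/d\bs\equiv 1$ this reads $\lambda_{\bs}(B)=\int_B (\bar{\bs}')^2 dx$ for the relevant Borel sets $B\subset E$, forcing $\lambda_{\bs}$ to be absolutely continuous with density $\bar{\bs}'$ and simultaneously $\bar{\bs}'$ to be its own square, i.e.\ $\bar{\bs}'\in\{0,1\}$. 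Handling the endpoints $l,r$ and the cases $l\in E$, $r\in E$ versus $l\notin E$, $r\notin E$ (and the associated boundary conditions in \eqref{eq:23}) will require a little case analysis but no new idea, paralleling the argument already used in the proof of Lemma~\ref{LM31}.
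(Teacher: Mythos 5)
Your overall architecture matches the paper's: Lemma~\ref{LM31} plus Theorem~\ref{THM22} reduce everything to identifying the admissible scale functions; one then shows $\bar{\bs}$ is absolutely continuous (because $\bs$ lies locally in $\sF'\subset\sF$), that $\bar{\bs}'=1$ on the gaps (the paper gets this from the coincidence of jumping measures, \cite[Theorem~2.1]{LY15}, rather than from your ad hoc test functions, but either works), and finally that $\bar{\bs}'\in\{0,1\}$ a.e.\ on $E$. Your sufficiency argument is correct and is in fact more detailed than the paper's one-line dismissal; your ``iff'' claim $|\bs(j)|<\infty\Leftrightarrow|j|<\infty$ is false in one direction (e.g.\ $r=\infty$ with $\bar{\bs}'=0$ on a set of infinite measure gives $\bs(r)<\infty$), but only the implication $|j|<\infty\Rightarrow|\bs(j)|<\infty$ is needed for $\sF'\subset\sF$, so this is harmless.

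The genuine gap is at the crux of the necessity direction. The energy identity only gives you
\[
\int_E \left(\frac{d\varphi}{d\bar{\bs}}\right)^2\bigl(\bar{\bs}'(x)-\bar{\bs}'(x)^2\bigr)\,dx=0
\]
for $\varphi=(g\circ\bar{\bs})|_E$ with $g\in H^1_{e,c}(\bar{\bs}(I))$, i.e.\ $\int (g')^2\,d\nu=0$ where $\nu$ is the image of $(\bar{\bs}'^2-\bar{\bs}')\,dx$ under $\bar{\bs}$. Your plan reads this as ``$\lambda_{\bs}(B)=\int_B(\bar{\bs}')^2dx$ for the relevant Borel sets $B$,'' but nothing entitles you to test against indicators: the admissible weights are exactly the functions $(g')^2$ with $g$ compactly supported, which are nonnegative, while the density $\bar{\bs}'(1-\bar{\bs}')$ is genuinely \emph{signed} at this stage (you have not ruled out $\bar{\bs}'>1$ on a set of positive measure), so positivity of the weights does not force the measure to vanish and cancellation between the positive and negative parts must be excluded. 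This is precisely the point the paper's proof spends most of its effort on, via the explicit piecewise-linear family $g_t$ that first pins down $F(t)=\nu((0,t])=1-(t-1)^2$ and then produces a contradiction. The gap is fixable within your framework --- for any Borel $B$ of finite Lebesgue measure one can split $B=B_1\sqcup B_2$ with $|B_1|=|B_2|$ and take $g'=1_{B_1}-1_{B_2}$, so that $g\in H^1_{e,c}(\bar{\bs}(I))$ and $(g')^2=1_B$, whence $\nu(B)=0$ for all $B$ --- but as written your proposal asserts the conclusion of this step rather than proving it.
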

\begin{proof}
The sufficiency can be verified straightforwardly.  We only prove the necessity.  Let $(\sE',\sF')$ be a Fukushima subspace with scale function  $\bs$ and speed measure $\mu$.  We need to show $\bs\in \mathscr S$.    

Note that 
\[
	H^1_{e,c}\circ \bs:=\{f|_E: f=g\circ \bar{\bs},  g\in H^1_{e,c}(\bar{\bs}(I))\}\subset \sF',
\]
where $H^1_{e,c}(\bar{\bs}(I))$ is the family of all functions in $H^1_e((\bar{\bs}(I))$ with compact support and $\bar{\bs}(I)=\{\bar{\bs}(x): x\in I\}$.   Particularly,  $\bs$ belongs to $\sF'\subset \sF$ locally.  In view of \eqref{eq:24} and the expression \eqref{eq:31} of $\sF$,  one gets that $\bar\bs$ is locally in $H^1_e((l,r))$.  Thus $\bar{\bs}$ is absolutely continuous on $(l,r)$.  
On account of \cite[Theorem~2.1]{LY15},  $(\sE',\sF')$ has the same jumping measure as $(\sE,\sF)$.  In view of \eqref{eq:23} and \eqref{eq:31},  we have $\bs(b_k)-\bs(a_k)=b_k-a_k$ for any $k\geq 1$.  Hence \eqref{eq:24} yields that $\bar{\bs}'(x)=1$ for any $x\in \cup_{k\geq 1}(a_k,b_k)=I\setminus E$.  

It remains to show $\bar\bs'=0$ or $1$,  a.e.  on $E$.  To do this take $\varphi:=\left(g\circ \bar{\bs}\right)|_E$ with arbitrary $g\in H^1_{e,c}(\bar{\bs}(I))$.  It follows from $\sE(\varphi, \varphi)=\sE'(\varphi,\varphi)$, $\bs(b_k)-\bs(a_k)=b_k-a_k$ and the absolute continuity of $\bar{\bs}$ that 
\begin{equation}\label{eq:38}
	\int_E \left(\frac{d\varphi}{d\bar\bs}\right)^2d\bar\bs=\int_E \varphi'(x)^2dx=\int_E \left(\frac{d\varphi}{d\bar\bs}\right)^2 \bar \bs'(x)^2dx.  
\end{equation}
We argue that $\bar \bs'(x)=\bar\bs'(x)^2$,  a.e.,  so that $\bar{\bs}'=0$ or $1$,  a.e.,  can be concluded.  In fact, since $\bar{\bs}$ is locally in $H^1_e((l,r))$,  $\bar \nu(dx):=(\bar{\bs}'(x)^2-\bar{\bs}'(x))dx$ is a signed Radon measure on $(l,r)$.  Let $\nu$ be the image measure of $\bar{\nu}$ under the homeomorphism $\bar{\bs}:(l,r)\rightarrow (\bar{\bs}(l), \bar{\bs}(r))$.   It suffices to prove that the signed Radon measure $\nu$ is a zero measure on $(\bar{\bs}(l), \bar{\bs}(r))$.  Argue by contradiction and without loss of generality,  suppose $[0,3]\subset (\bar{\bs}(l), \bar{\bs}(r))$.  Let $F(y):=\nu((0,y])$ for $y\geq 0$,  which is a continuous function because $\nu$ clearly charges no singletons.  Then $F$ is not equal to $0$ at some point in $[0,3]$.  Suppose further that $F(1)=1$.  
  Note that  \eqref{eq:38} yields that
\begin{equation}\label{eq:39} 
	\int_{\bar{\bs}(l)}^{\bar{\bs}(r)} g'(y)^2 dF(y)=0,\quad \forall g\in H^1_{e,c}(\bar{\bs}(I)).  
\end{equation}
For $t\in [1,3]$,  take a function $g_t(y)=0$ for $y\notin [0,t]$,  $g_t(y)=y$ for $y\in [0,1]$ and $g_t(y):= (y-t)/(1-t)$ for $y\in [1,t]$.  Clearly $g_t\in H^1_{e,c}(\bar{\bs}(I))$, and substituting $g_t$ in \eqref{eq:39} we find that $F(t)=1-(t-1)^2$ for any $t\in [1,3]$.  Now put another function $g$ as follows: $g(y):=0$ for $y\notin [0,3]$,  $g(y):=y$ for $y\in [0,1]$,  $g(y):=1$ for $y\in [1,2]$ and $g(y):=3-y$ for $y\in [2,3]$.  We have $g\in H^1_{e,c}(\bar{\bs}(I))$ again but \eqref{eq:39} yields that
\[
	0=F(1)+(F(3)-F(2))=1+(-3-0)\neq 0,
\]
as leads to a contradiction.  That completes the proof. 
\end{proof}

\section{Extended scale functions and traces}\label{SEC4}



Let $(\sE',\sF')$ be a Fukushima subspace of $(\sE,\sF)$,  whose scale function is $\bs\in \mathscr S$.  Its extended scale function $\bar{\bs}$ belongs to
\[
\begin{aligned}
\overline{\mathscr S}:=&\{\bar{\bs}: \text{ absolutely continuous and strictly increasing on }I,  \\
&\qquad\qquad\qquad \qquad \bar{\bs}'=0\text{ or }1,  \text{a.e.  and }\bar{\bs}'(x)=1,\forall x\in I\setminus E\}. 
\end{aligned}
\]
On the contrary,  $\mathscr S$ can be obtained by restricting all functions in $\overline{\mathscr{S}}$ to $E$. 
There is a sense in which $\mathscr S$ and $\overline{\mathscr S}$ establish a closer and deeper connection.  To accomplish this,  let $W$ be a Brownian motion on $I$, which is reflecting at closed endpoints while absorbing at finite open endpoints.  In other words,  $W$ is associated to the regular Dirichlet form on $L^2(I)$: 
\[
\begin{aligned}
	&\mathscr G:=\{f\in L^2(I)\cap H^1_e((l,r)): f(j)=0\text{ whenever }j\notin I\text{ and }|j|<\infty\text{ for }j=l\text{ or }r\},  \\
	&\mathscr A(f,g):=\frac{1}{2}\int_I f'(x)g'(x)dx,\quad f,g\in \mathscr G.  
\end{aligned}\]
Then we have the following. 

\begin{corollary}\label{COR41}
Let $(\sA,\sG)$ be the Dirichlet form associated to the Brownian motion $W$ and $(\sE,\sF)$ be the Dirichlet form \eqref{eq:31} associated to the quasidiffusion $X$.  Further let $\bs\in \mathscr S$ and $\bar{\bs}\in \overline{\mathscr{S}}$ be its extended scale function.
\begin{itemize} 
\item[\rm (1)] $\bs$ determines the Fukushima subspace $(\sE',\sF')$ of $(\sE,\sF)$,  as obtained in Theorem~\ref{THM32}.  
\item[\rm (2)] $\bar{\bs}$ determines the Fukushima subspace $(\sA',\sG')$ of $(\sA,\sG)$,  where
\begin{equation}\label{eq:41-2}
\begin{aligned}
	&\sG'=\{f\in L^2(I): f\ll \bar{\bs},df/d\bar{\bs}\in L^2(I,\lambda_{\bar{\bs}}),    \\
	&\qquad\qquad\qquad  f(j)=0\text{ whenever }j\notin I\text{ and }|\bar{\bs}(j)|<\infty\text{ for }j=l\text{ or }r\},\\
	&\sA'(f,g)=\frac{1}{2}\int_I \frac{df}{d\bar{\bs}}\frac{dg}{d\bar{\bs}}d\bar{\bs},\quad f,g\in \sG'.  
\end{aligned}\end{equation}
\item[\rm (3)] $(\sE,\sF)$ is the trace Dirichlet form of $(\sA, \sG)$ on $L^2(E,\mu)$.  Accordingly $X$ is the time-changed process of $W$ by the positive continuous additive functional corresponding to $\mu$. 
\item[\rm (4)] $(\sE',\sF')$ is the trace Dirichlet form of $(\sA',\sG')$ on $L^2(E,\mu)$.  Accordingly $X'$ is the time-changed process of $W'$ by the positive continuous additive functional corresponding to $\mu$,  where $W'$ is the Hunt process associated to $(\sA',\sG')$. 
\end{itemize}
\end{corollary}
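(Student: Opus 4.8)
The plan is to treat (1) as nothing more than a restatement of Theorem~\ref{THM32}, and to derive (2), (3), (4) from Theorem~\ref{THM22} together with the theory of traces of regular Dirichlet forms. For (2), the first step is to recognise $(\sA',\sG')$ as an instance of the representation \eqref{eq:23}: regard the interval $I$ itself as an element of $\overline{\mathscr K}$ with \emph{empty} gap set (so that the sum in \eqref{eq:23} is empty), equip it with the scale function $\bar{\bs}$ and with Lebesgue measure as speed measure. By the existence statement recalled after Theorem~\ref{THM22} there is a unique skip-free Hunt process $W'$ on $I$ with this scale and speed, and since any $f$ with $f\ll\bar{\bs}$ and $df/d\bar{\bs}\in L^2$ is continuous (because $\bar{\bs}$ is), Theorem~\ref{THM22} shows that its associated regular Dirichlet form on $L^2(I)$ is exactly \eqref{eq:41-2}. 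Hence $(\sA',\sG')$ is a regular Dirichlet form on $L^2(I)$.

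The second step for (2) is to verify that $(\sA',\sG')$ is a Fukushima subspace of $(\sA,\sG)$. Given $f\in\sG'$, write $f=g\circ\bar{\bs}$ with $g$ absolutely continuous; since $\bar{\bs}$ is absolutely continuous and strictly increasing, $f$ is absolutely continuous with $f'(x)=\tfrac{df}{d\bar{\bs}}(x)\,\bar{\bs}'(x)$ a.e., and using $\bar{\bs}'\in\{0,1\}$ a.e. (so $\bar{\bs}'(x)^2=\bar{\bs}'(x)$) together with $d\bar{\bs}=\bar{\bs}'(x)\,dx$,
\[
\int_I f'(x)^2\,dx=\int_I\Big(\tfrac{df}{d\bar{\bs}}\Big)^2\bar{\bs}'(x)^2\,dx=\int_I\Big(\tfrac{df}{d\bar{\bs}}\Big)^2\bar{\bs}'(x)\,dx=\int_I\Big(\tfrac{df}{d\bar{\bs}}\Big)^2\,d\bar{\bs}<\infty .
\]
Thus $f\in L^2(I)\cap H^1_e((l,r))$; moreover a finite open endpoint $j$ of $I$ has $|\bar{\bs}(j)|<\infty$ (since $\bar{\bs}'\le 1$ forces $\bar{\bs}$ to have a finite limit at a finite endpoint), so the boundary condition defining $\sG'$ forces $f(j)=0$ there, which is precisely the boundary condition defining $\sG$. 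Hence $\sG'\subset\sG$, and the displayed identity gives $\sA'(f,g)=\sA(f,g)$ on $\sG'$; so $(\sA',\sG')$ is a Fukushima subspace of $(\sA,\sG)$, determined by $\bar{\bs}$ in view of the uniqueness discussion preceding Theorem~\ref{THM22}.

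For (3) and (4) the plan is the same for both, applying the trace theory of regular Dirichlet forms (see \cite[Chapter~6]{FOT11} or \cite[Chapter~5]{CF12}). Since $\mu$ is a positive Radon measure on $I$ fully supported on $E$ and singletons of $I$ are non-polar both for $W$ and for $W'$, $\mu$ charges no polar set and its quasi-support is $E$; hence the time-changed process $\check W$ (resp. $\check W'$) of $W$ (resp. $W'$) by the PCAF with Revuz measure $\mu$ is a Hunt process on $E$ whose Dirichlet form on $L^2(E,\mu)$ is the trace form. To identify this trace form one uses the harmonic extension: a function that is $\sA$-harmonic on $I\setminus E$ is affine on each gap $(a_k,b_k)$, while a function that is $\sA'$-harmonic there is $\bar{\bs}$-affine on $(a_k,b_k)$; plugging the corresponding extensions $Hf$, resp. $H'f$, of $f\in\sF$, resp. $f\in\sF'$, into $\sA$, resp. $\sA'$, one obtains
\[
\sA(Hf,Hf)=\tfrac12\int_E f'(x)^2\,dx+\tfrac12\sum_{k}\frac{(f(b_k)-f(a_k))^2}{b_k-a_k},\qquad
\sA'(H'f,H'f)=\tfrac12\int_E\Big(\tfrac{df}{d\bs}\Big)^2 d\bs+\tfrac12\sum_{k}\frac{(f(b_k)-f(a_k))^2}{\bar{\bs}(b_k)-\bar{\bs}(a_k)} .
\]
In the second identity the key point is that $\bar{\bs}'=1$ on $I\setminus E$ yields $\bar{\bs}(b_k)-\bar{\bs}(a_k)=b_k-a_k$, so the jump part coincides with that of \eqref{eq:31} (consistent with the fact that Fukushima subspaces preserve the jumping measure). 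Combined with the identification of the extended Dirichlet spaces of $(\sA,\sG)$, resp. $(\sA',\sG')$, restricted to $E$ (which yields precisely the domains in \eqref{eq:31}, resp. \eqref{eq:23}), this gives $(\check\sA,\check\sF)=(\sE,\sF)$ and $(\check\sA',\check\sF')=(\sE',\sF')$; therefore $\check W=X$ and $\check W'=X'$, proving (3) and (4).

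I expect the main obstacle to lie in the trace-form identification of (3)–(4): one must pin down the extended Dirichlet space of $(\sA,\sG)$ (resp. of $(\sA',\sG')$) and show that its restriction to $E$, intersected with $L^2(E,\mu)$, is exactly $\sF$ (resp. $\sF'$), keeping careful track of the boundary behaviour at $l$ and $r$ (reflecting at closed endpoints, absorbing at finite open endpoints, and whether it is $|\bs(j)|$ or $|j|$ that is finite), and of the fact that the quasi-support of $\mu$ is the whole of $E$. Once these are in place, the harmonic-extension computations above are routine, as is checking that $\check W$, $\check W'$ inherit the skip-free, regular and no-killing-inside properties, so that they are genuinely the processes $X$, $X'$.
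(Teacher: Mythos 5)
Your proposal is correct, and it fleshes out arguments that the paper itself disposes of by citation alone: the paper proves (2) by pointing to \cite{FFY05}, (3) by pointing to \cite[Theorem~4.1]{L23b}, and (4) by saying one should mimic the proof of \cite[Theorem~2.1]{LY17}. Where you differ is mainly in packaging. For (2), your idea of viewing $I$ itself as an element of $\overline{\mathscr K}$ with empty gap set, so that $(\sA',\sG')$ is the instance of \eqref{eq:23} attached to the scale function $\bar{\bs}$ and Lebesgue speed measure, is a nice unification not present in the paper (which treats the Brownian case as a separately known result); your verification that $\sG'\subset\sG$ with $\sA'=\sA$ on $\sG'$ via $\bar{\bs}'^2=\bar{\bs}'$ a.e.\ is exactly the mechanism of \cite{FFY05}. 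For (3), the paper's route is essentially definitional --- a quasidiffusion is constructed in \S\ref{SEC21} as a time change of Brownian motion, so \cite[Theorem~4.1]{L23b} gives the statement directly --- whereas you rederive it analytically through the trace form; that is heavier than necessary for (3), but it is precisely the machinery one needs for (4) anyway, and your harmonic-extension computation (linear interpolation on the gaps for $\sA$, $\bar{\bs}$-linear interpolation for $\sA'$, with $\bar{\bs}(b_k)-\bar{\bs}(a_k)=b_k-a_k$ reproducing the common jump part) is the correct core of the argument of \cite[Theorem~2.1]{LY17}. The details you flag as remaining --- identification of the extended Dirichlet spaces restricted to $E$, the quasi-support of $\mu$, and the endpoint conventions --- are real but routine, and you have identified them accurately; none of them threatens the argument. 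In short: same mathematics, but you supply a self-contained derivation where the paper defers to references, and your observation that (2) is a special case of Theorem~\ref{THM22} is a genuine (if small) streamlining.
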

\begin{proof}
The second assertion is due to \cite{FFY05}.  The third one is indicated in  \cite[Theorem~4.1]{L23b}.  The final one can be concluded by mimicking the proof of \cite[Theorem~2.1]{LY17}.  That completes the proof. 
\end{proof}

\section{Structure of Fukushima subspaces}\label{SEC5} 

Fix a quasidiffusion $X$ on $E$ whose Dirichlet form $(\sE,\sF)$ on $L^2(E,\mu)$ is expressed as \eqref{eq:31}. 
 In this section we turn to investigate the structure of the family of all Fukushima subspaces of $(\sE,\sF)$,  i.e.  $$\mathfrak{S}:=\{\sF': (\sE',\sF')\text{ is a Fukushima subspace of }(\sE,\sF)\}.$$ 
Note that $\mathfrak{S}$ contains at least one element $\sF$ and can be obviously partially ordered by inclusion ``$\subset$".   Particularly,  if $(\sE^i,\sF^i)$,  $i=1,2$,  are Fukushima subspaces of $(\sE,\sF)$ and $\sF^1\subset \sF^2$,  then $(\sE^1,\sF^1)$ is also a Fukushima subspace of $(\sE^2,\sF^2)$.  

\subsection{Characteristic sets}

Put a family of measure-dense subsets of $I$ as follows:
\[
\overline{\mathscr H}:=\{\bar{G}\subset I: I\setminus E \subset \bar{G},  |\bar{G}\cap (c,d)|>0\text{ for any interval }(c,d)\subset I\},
\]
where $|\cdot|$ stands for the Lebesgue measure.  Set further
\[
\mathscr H:=\{\bar{G}\cap  E: \bar{G}\in \overline{\mathscr H}\}.  
\]
Note that every element in $\overline{\mathscr H}$ or $\mathscr H$ should be regarded as an a.e.  equivalence class.  For simplification we still treat them as usual sets when dealing with set operations.
Clearly there is a one-to-one correspondence between $\mathscr H$ and $\overline{\mathscr H}$.  The proof of the following lemma is truly straightforward,  so we omit it.  

\begin{lemma}\label{LM42}
\begin{itemize}
\item[\rm (1)] A function $\bs$ defined on $E$ is a scale function in $\sS$,  if and only if there is a unique set $G\in \mathscr H$ such that
\[
	\bs(x)=\bs(e)+\int_e^x 1_{G\cup (I\setminus E)}(y)dy,\quad x\in E,
\]
where $e$ is an arbitrary fixed point in $E$.  
\item[\rm (2)] A function $\bar{\bs}$ defined on $I$ belongs to $\overline{\mathscr S}$,  if and only if there is a unique set $\bar{G}\in \mathscr H$ such that
\[
	\bar\bs(x)=\bar\bs(e)+\int_e^x 1_{\bar{G}}(y)dy,\quad x\in I,
\]
where $e$ is an arbitrary fixed point in $I$.  
\item[\rm (3)] Adopt the same notations as the previous two assertions.  If $\bar{\bs}\in \overline{\mathscr S}$ is the extended scale function of $\bs\in \mathscr S$,  then  $\bar{G}=G\cup (I\setminus E)$ and $G=\bar{G}\cap E$.  
\end{itemize}
\end{lemma}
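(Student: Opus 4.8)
The plan is to establish assertion (2) directly from the Lebesgue fundamental theorem of calculus, and then to read off (1) and (3) using the fact that restriction to $E$ and the extension procedure \eqref{eq:24} set up mutually inverse bijections between $\mathscr{S}$ and $\overline{\mathscr{S}}$.

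First I would prove (2). For sufficiency, given $\bar{G}\in\overline{\mathscr H}$, the function $\bar{\bs}(x):=\bar{\bs}(e)+\int_e^x 1_{\bar G}(y)\,dy$ is absolutely continuous on $I$ with $\bar{\bs}'=1_{\bar G}\in\{0,1\}$ a.e.; choosing a representative of the a.e.-class $\bar{G}$ that literally contains the open set $I\setminus E$, the primitive $\bar{\bs}$ is genuinely affine of slope $1$ on each component $(a_k,b_k)$, so $\bar{\bs}'(x)=1$ for every $x\in I\setminus E$; moreover $\bar{\bs}(d)-\bar{\bs}(c)=|\bar G\cap(c,d)|>0$ for every non-degenerate interval $(c,d)\subset I$, which gives strict monotonicity. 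Hence $\bar{\bs}\in\overline{\mathscr S}$. For necessity, given $\bar{\bs}\in\overline{\mathscr S}$, absolute continuity yields $\bar{\bs}(x)=\bar{\bs}(e)+\int_e^x\bar{\bs}'(y)\,dy$, and the Borel set $\bar G:=\{y\in I:\bar{\bs}'(y)=1\}\cup(I\setminus E)$ satisfies $1_{\bar G}=\bar{\bs}'$ a.e., contains $I\setminus E$, and obeys $|\bar G\cap(c,d)|=\bar{\bs}(d)-\bar{\bs}(c)>0$ for all intervals $(c,d)\subset I$ by strict monotonicity, so $\bar G\in\overline{\mathscr H}$. Uniqueness is immediate, since two members of $\overline{\mathscr H}$ producing the same $\bar{\bs}$ have a.e.-equal indicators, hence coincide as equivalence classes.

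Next I would deduce (1) and (3). By the definition \eqref{eq:38-2}, each $\bs\in\mathscr S$ equals $\bar{\bs}|_E$ for some $\bar{\bs}\in\overline{\mathscr S}$; since such a $\bar{\bs}$ is affine of slope $1$ on every $(a_k,b_k)$ (because $\bar{\bs}'=1$ there and $\bar{\bs}$ is continuous at the endpoints $a_k,b_k$), it must agree with the function obtained from $\bs$ via \eqref{eq:24}, so the extension is uniquely determined by $\bs$ and restriction/extension are mutually inverse bijections $\mathscr S\leftrightarrow\overline{\mathscr S}$. Now fix $\bs\in\mathscr S$ with extended scale function $\bar{\bs}$, take the set $\bar G\in\overline{\mathscr H}$ furnished by (2), and pick $e\in E$. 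Since $I\setminus E\subset\bar G$ we have $\bar G=(\bar G\cap E)\cup(I\setminus E)$; putting $G:=\bar G\cap E\in\mathscr H$ gives $\bar G=G\cup(I\setminus E)$, whence $1_{\bar G}=1_{G\cup(I\setminus E)}$ and, restricting the formula of (2) to $x\in E$, the representation in (1) holds, which simultaneously proves (3). Conversely, given $G\in\mathscr H$, write $G=\bar G'\cap E$ with $\bar G'\in\overline{\mathscr H}$; then $I\setminus E\subset\bar G'$ forces $G\cup(I\setminus E)=\bar G'$, so the function $\bar{\bs}$ attached by (2) to $\bar G'$ lies in $\overline{\mathscr S}$ and its restriction to $E$ is precisely the asserted $\bs$, which therefore belongs to $\mathscr S$. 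Uniqueness of $G$ follows from uniqueness of $\bar G$ together with the one-to-one correspondence $\mathscr H\leftrightarrow\overline{\mathscr H}$.

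The only step that is not pure bookkeeping is the passage from ``$\bar{\bs}'\in\{0,1\}$ a.e.'' to the pointwise requirement ``$\bar{\bs}'(x)=1$ for every $x\in I\setminus E$'' in the definition of $\overline{\mathscr S}$; this is handled exactly by working with a representative of $\bar G$ that contains the open set $I\setminus E$, so that on each component interval the primitive is honestly affine of slope $1$. Everything else reduces to two standard facts: an absolutely continuous function is the Lebesgue primitive of its derivative, and strict monotonicity of $\bar{\bs}$ is equivalent to the measure-density of $\bar G$.
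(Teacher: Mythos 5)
Your proof is correct. The paper omits the proof of this lemma entirely (``truly straightforward, so we omit it''), so there is no argument to compare against; your write-up --- proving (2) from the two standard facts that an absolutely continuous function is the Lebesgue primitive of its a.e.\ derivative and that strict monotonicity of the primitive is equivalent to the measure-density condition defining $\overline{\mathscr H}$, then transporting to (1) and (3) via the mutually inverse restriction/extension bijections $\mathscr S\leftrightarrow\overline{\mathscr S}$ and $\mathscr H\leftrightarrow\overline{\mathscr H}$ --- is exactly the intended routine verification, and you correctly handle the one delicate point (passing from ``$\bar{\bs}'\in\{0,1\}$ a.e.'' to the pointwise condition on $I\setminus E$ by fixing a representative of $\bar G$ containing that open set). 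Incidentally, the ``$\bar G\in\mathscr H$'' in the statement of part (2) should read ``$\bar G\in\overline{\mathscr H}$'', as you implicitly and correctly assumed.
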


The set $G$ in this lemma for $\bs\in \mathscr S$ is actually
\[
	G:=\{x\in E: \bs'(x)=1\},  
\]
which is called the \emph{characteristic set} of $\bs$.  Accordingly 
\[
	\bar{G}=G\cup (I\setminus E)=\{x\in I: \bar{\bs}'(x)=1\}
\]
is called the \emph{characteristic set} of $\bar\bs$.  In view of Theorem~\ref{THM32} and Lemma~\ref{LM42},  the family $\mathscr H$ of characteristic sets also determines all Fukushima subspaces completely.  Hence we also call $G$ the \emph{characteristic set} of corresponding Fukushima subspace.  Characteristic sets provide an equivalent description for the partial ordering of $\mathfrak{S}$.  

\begin{proposition}\label{PRO43}
Let $\sF^i\in \mathfrak{S}$ whose characteristic set is $G_i$ for $i=1,2$.  Then $\sF^1\subset \sF^2$ if and only if $G_1\subset G_2$.  Particularly,  $\sF^1=\sF^2$ if and only if $G_1=G_2$.  
\end{proposition}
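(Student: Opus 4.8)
The plan is to prove the two implications of the equivalence $\sF^1\subset \sF^2 \iff G_1\subset G_2$ using the explicit description of scale functions via characteristic sets from Lemma~\ref{LM42}, and then deduce the ``particularly'' clause by applying the equivalence twice (since $G_1=G_2$ is the same as $G_1\subset G_2$ and $G_2\subset G_1$, and likewise for $\sF^1=\sF^2$). Throughout I will work with the extended scale functions $\bar{\bs}_i$ and their characteristic sets $\bar{G}_i = G_i\cup(I\setminus E)$, using Lemma~\ref{LM42}(3) to translate between $G_i$ and $\bar{G}_i$; note $G_1\subset G_2$ holds if and only if $\bar G_1\subset \bar G_2$, since both contain $I\setminus E$ and agree with $G_i$ on $E$.

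For the direction $G_1\subset G_2 \Rightarrow \sF^1\subset \sF^2$: I would fix $f\in \sF^1$ and show $f\in\sF^2$, using the representation \eqref{eq:23}. By Lemma~\ref{LM42}, on $(a_k,b_k)$ and on $\bar G_i$ the derivative $\bar{\bs}_i'$ equals $1$, while off $\bar G_i\cup (I\setminus E)$ it is $0$. The key pointwise comparison is that $\lambda_{\bar{\bs}_1}$, the Lebesgue--Stieltjes measure of $\bar{\bs}_1$, equals $1_{\bar G_1}(x)\,dx$, and similarly for $\bar{\bs}_2$; since $\bar G_1\subset\bar G_2$ we get $\lambda_{\bar{\bs}_1}\leq \lambda_{\bar{\bs}_2}$ as measures on $I$. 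One then checks that $f\ll\bar{\bs}_1$ forces $f$ to be constant on every maximal interval where $\bar{\bs}_2$ increases but $\bar{\bs}_1$ does not (i.e.\ on components of $\bar G_2\setminus\bar G_1$), which is exactly the condition for $df/d\bar{\bs}_2$ to be well defined; and then $(df/d\bar{\bs}_2)^2\,d\bar{\bs}_2 = (df/d\bar{\bs}_1)^2\,d\bar{\bs}_1$ on $\bar G_1$ while the extra mass of $\bar{\bs}_2$ on $\bar G_2\setminus \bar G_1$ contributes nothing because $df/d\bar{\bs}_2=0$ there. This yields $f\in H^1_{e,\bs_2}(E)$ with $\sE^2$-norm controlled, and the boundary and jump conditions transfer verbatim since $\bar{\bs}_1(j)$ finite implies $\bar{\bs}_2(j)$ finite (as $\bar{\bs}_2$ grows no faster than $\bar{\bs}_1$ off... wait, actually $\bar{\bs}_2$ grows \emph{faster}), so here one must argue the other way --- I'd instead note that finiteness of $\bar{\bs}_i(j)$ is controlled by $|\bar G_i \cap I|$ near $j$ and handle the endpoint conditions by a direct case analysis, which is the one slightly delicate bookkeeping point.

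For the converse $\sF^1\subset\sF^2 \Rightarrow G_1\subset G_2$: I would argue contrapositively. If $G_1\not\subset G_2$, then $\bar G_1\setminus \bar G_2$ has positive Lebesgue measure, so there is an interval $(c,d)\subset I$ on which $|\bar G_1\cap(c,d)| > 0$ while we can also ensure $|(\bar G_2\setminus\bar G_1)\cap(c,d)|>0$ or else $\bar{\bs}_2$ is locally constant somewhere $\bar{\bs}_1$ is not --- in either case I want to produce $f\in\sF^1\setminus\sF^2$. The cleanest construction: take $f = g\circ\bar{\bs}_1$ restricted to $E$ with $g\in H^1_{e,c}(\bar{\bs}_1(I))$ a suitable tent function supported near the relevant scale values; then $f\in H^1_{e,c}\circ\bar{\bs}_1 \subset \sF^1$ by the inclusion noted in the proof of Theorem~\ref{THM32}. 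On a component of $\bar G_1\setminus\bar G_2$ the function $\bar{\bs}_1$ strictly increases while $\bar{\bs}_2$ is constant, so $f$ will be non-constant on a set where $\bar{\bs}_2$ is constant, hence $f$ cannot be absolutely continuous with respect to $\bar{\bs}_2$ --- that is, $f\notin H^1_{e,\bs_2}(E)$, so $f\notin \sF^2$. This contradicts $\sF^1\subset\sF^2$.

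The main obstacle I anticipate is the converse direction, specifically guaranteeing that ``$G_1\not\subset G_2$'' really produces a \emph{component} (a genuine subinterval of $I$) on which $\bar{\bs}_1$ increases and $\bar{\bs}_2$ is constant, rather than merely a positive-measure set with empty interior. Since $\bar G_2$ is only defined up to a.e.\ equivalence and can be chosen to be any measure-dense set, it is \emph{not} true that $\bar{\bs}_2$ is constant on a whole interval just because $|\bar G_1\setminus\bar G_2|>0$ --- $\bar{\bs}_2$ is still strictly increasing everywhere. So the obstruction is more subtle: I must instead exploit that $df/d\bar{\bs}_2$ being square-integrable against $\lambda_{\bar{\bs}_2}$ fails. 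The right move is to reuse the signed-measure argument from the proof of Theorem~\ref{THM32} almost verbatim: assume $f\in\sF^1\cap\sF^2$ for all such $f$, deduce via the identity $\int_E (df/d\bar{\bs}_1)^2 d\bar{\bs}_1 = \int_E (df/d\bar{\bs}_2)^2 d\bar{\bs}_2$ (which is forced, since both equal $2\sE^i(f,f)$ minus the common jump term and $\sE^1=\sE^2=\sE$ on the common domain) that the image of $(1_{\bar G_1}-1_{\bar G_2})\,dx$ under $\bar{\bs}_1$ is a zero measure, hence $1_{\bar G_1}\leq 1_{\bar G_2}$ a.e., i.e.\ $\bar G_1\subset \bar G_2$. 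This way the converse reduces cleanly to the technique already deployed in Theorem~\ref{THM32}, and no genuinely new estimate is needed --- only care in setting up the common quadratic-form identity on $\sF^1\cap\sF^2$ and in tracking the a.e.\ equivalence classes.
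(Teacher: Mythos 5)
Your final plan is correct and essentially matches the paper's proof: the forward direction rests on $d\bar{\bs}_1=1_{G_1\cup(I\setminus E)}\,dx\ll d\bar{\bs}_2$ with Radon--Nikodym density $0$ or $1$, giving $H^1_{e,\bs_1}(E)\subset H^1_{e,\bs_2}(E)$, and the converse rests on functions built from $\bs_1$ lying locally in $\sF^1\subset\sF^2$, which forces $d\bar{\bs}_1\ll d\bar{\bs}_2$ and hence $G_1\subset G_2$ a.e. The paper's converse is even lighter than your corrected version: it does not need the quadratic-form identity or the signed-measure argument of Theorem~\ref{THM32} at all, since mere membership of $\bs_1$ in $\sF^2$ locally already yields the absolute continuity $1_{\bar G_1}\,dx\ll 1_{\bar G_2}\,dx$, which is the whole claim (you correctly discarded your initial ``component of $\bar G_1\setminus\bar G_2$'' idea, which fails because both extended scale functions are strictly increasing everywhere).
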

\begin{proof}
Denote the scale functions of $(\sE^i,\sF^i)$ by $\bs_i$.  The extended scale function of $\bs_i$ is denoted by $\bar{\bs}_i$.  If $G_1\subset G_2$,  then the measure $d\bar{\bs}_1$ is absolutely continuous with respect to $d\bar{\bs}_2$ and the Radon-Nikidym derivative is $d\bar{\bs}_1/d\bar{\bs}_2=0$ or $1$,  $d\bar{\bs}_2$-a.e.  It is straightforward to verify that $H^1_{e,\bs_1}(E)\subset H^1_{e,\bs_2}(E)$ and hence $\sF^1\subset \sF^2$.  To the contrary,  note that $\bs_1$ belongs to $\sF^1\subset \sF^2$ locally.  In view of the expression of $\sF^2$ (see \eqref{eq:24}),  one must have that the measure $d\bar{\bs}_1$ is absolutely continuous with respect to $d\bar{\bs}_2$.  Since $d\bar{\bs}_i(x)=1_{G_i\cup (I\setminus E)}(x)dx$ due to Lemma~\ref{LM42},  it follows that $G_1\subset G_2$.  That completes the proof.   
\end{proof}

This result readily yields the following.

\begin{corollary}\label{COR53}
Let $\sF'\in \mathfrak{S}$ whose scale function is $\bs$ and characteristic set is $G$.  Then $\sF'\neq \sF$ amounts to either of the following:
\begin{itemize}
\item[(1)] $\bs$ is not the natural scale function;
\item[(2)] $E\setminus G$ is of positive Lebesgue measure.  
\end{itemize}
\end{corollary}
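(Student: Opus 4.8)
The plan is to deduce Corollary~\ref{COR53} directly from Proposition~\ref{PRO43} and Lemma~\ref{LM42}, treating it as a near-immediate consequence of the already-established characterization of $\mathfrak{S}$ via characteristic sets. The statement asserts that $\sF'\neq \sF$ is equivalent to the disjunction of (1) and (2), so first I would identify what ``$\sF'=\sF$'' corresponds to on the level of scale functions and characteristic sets: the Dirichlet form $(\sE,\sF)$ is itself a Fukushima subspace of itself, its scale function is the natural scale $\bs(x)=x$ (so $\bar{\bs}'(x)=1$ everywhere), and hence by Lemma~\ref{LM42} its characteristic set is $G=E$ (equivalently $\bar G = I$). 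So the content of the corollary is: $\sF'\neq\sF$ iff $G\neq E$, which by Proposition~\ref{PRO43} is exactly $\sF'\neq\sF$; the corollary merely rephrases $G\neq E$ in the two listed ways.

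The key steps, in order, would be as follows. First, record that $\sF$ has characteristic set $E$ (scale function the identity), so by Proposition~\ref{PRO43}, $\sF'=\sF$ iff $G=E$ (as a.e.-equivalence classes in $\mathscr H$), hence $\sF'\neq\sF$ iff $|E\setminus G|>0$. Since $G\subset E$ always and $G$ and $E$ are compared as a.e.\ classes, $G\neq E$ is the same as $|E\setminus G|>0$; this gives condition (2) immediately. Second, to connect this with condition (1), recall from Lemma~\ref{LM42}~(1) that $\bs'(x)=1_{G\cup(I\setminus E)}(x)$ for a.e.\ $x$, and $\bar{\bs}'(x)=1$ on $I\setminus E$ always; so $\bs$ is the natural scale function (i.e.\ $\bar{\bs}(x)=x+$const, equivalently $\bar{\bs}'\equiv 1$ a.e.\ on $I$) precisely when $\bar G=I$ a.e., i.e.\ $|E\setminus G|=0$. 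Thus (1) fails iff (2) fails, so (1) and (2) are in fact equivalent, and each is equivalent to $\sF'\neq\sF$. Finally I would note the logical point that ``either of the following'' here is consistent because (1) $\Leftrightarrow$ (2); one could present it simply as: $\sF'\neq\sF \Leftrightarrow |E\setminus G|>0 \Leftrightarrow \bs \text{ is not the natural scale}$.

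I do not anticipate a genuine obstacle here — the proof is bookkeeping on top of Proposition~\ref{PRO43}. The only subtlety worth a sentence of care is the a.e.-equivalence-class convention: one must make sure that ``$\sF'=\sF$'' (an equality of function spaces/forms) really does force $G=E$ as an element of $\mathscr H$, which is exactly the ``particularly'' clause of Proposition~\ref{PRO43}, and that ``$\bs$ is the natural scale function'' is to be read up to affine transformation (consistent with the uniqueness statement for scale functions in \S\ref{SEC22}), so that it is equivalent to $\bar{\bs}'=1$ a.e.\ rather than to $\bar{\bs}(x)=x$ on the nose. Once that convention is fixed, the chain of equivalences closes. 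I would therefore write the proof in three or four lines: invoke Proposition~\ref{PRO43} with $G_2=E$, rewrite $G\subsetneq E$ as $|E\setminus G|>0$, and then use $\bs'=1_{G\cup(I\setminus E)}$ from Lemma~\ref{LM42} to see that $|E\setminus G|>0$ is the same as $\bar{\bs}'\not\equiv 1$, i.e.\ $\bs$ not being the natural scale.
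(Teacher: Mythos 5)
Your argument is correct and is exactly the deduction the paper intends: the paper gives no written proof (it states only that Proposition~\ref{PRO43} ``readily yields'' the corollary), and your chain $\sF'\neq\sF \Leftrightarrow |E\setminus G|>0 \Leftrightarrow \bar{\bs}'\not\equiv 1$ a.e., via Proposition~\ref{PRO43} with $G_2=E$ and Lemma~\ref{LM42}, is the intended bookkeeping, including the correct reading of ``natural scale'' up to affine transformation and of $G$ as an a.e.\ equivalence class.
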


\subsection{Uniqueness of Fukushima subspaces}

By means of characteristic sets we can judge whether $(\sE,\sF)$ has proper Fukushima subspaces.

\begin{theorem}\label{THM54}
Let $(\sE,\sF)$ be the Dirichlet form associated to a quasidiffusion on $E\in \mathscr K$.  Then $(\sE,\sF)$ has no proper Fukushima subspaces, if and only if $E$ is of zero Lebesgue measure.  
\end{theorem}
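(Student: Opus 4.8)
The plan is to use the characterization of all Fukushima subspaces via characteristic sets (Theorem~\ref{THM32}, Lemma~\ref{LM42}, Proposition~\ref{PRO43}) and reduce the statement to an elementary measure-theoretic dichotomy about $E$. By Corollary~\ref{COR53}, a Fukushima subspace $\sF'$ with scale function $\bs$ and characteristic set $G$ is proper if and only if either $\bs$ fails to be the natural scale or $E\setminus G$ has positive Lebesgue measure; since $\bs$ is not the natural scale exactly when $\bar\bs'\neq 1$ on a set of positive measure, i.e.\ when $E\setminus G$ (the set where $\bar\bs'=0$) has positive Lebesgue measure, the two alternatives coincide. So the real content is: \emph{there exists $G\in\mathscr H$ with $|E\setminus G|>0$ if and only if $|E|>0$.}

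First I would prove the easy direction. Suppose $|E|=0$. Then for any $\bar G\in\overline{\mathscr H}$ we have, since $I\setminus E\subset\bar G$ and $|E|=0$, that $|I\setminus\bar G|\le |E|=0$, so $\bar G = I$ up to a null set, hence $G=\bar G\cap E = E$ up to a null set. By Lemma~\ref{LM42}(1) this forces $\bs$ to be the natural scale function, so $\sF'=\sF$ by Proposition~\ref{PRO43}; thus there are no proper Fukushima subspaces. (Note that the hypothesis (QK) is what guarantees there is no boundary subtlety; the only scale function in $\mathscr S$ is $\bs(x)=x$ up to affine transformation.)

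For the converse, assume $|E|>0$ and I would construct explicitly a set $\bar G\in\overline{\mathscr H}$ with $|E\setminus\bar G|>0$, equivalently with $|\bar G\cap E|<|E|$. The construction must respect two constraints from the definition of $\overline{\mathscr H}$: $\bar G$ must contain $I\setminus E$, and $\bar G$ must meet every subinterval of $I$ in positive measure. The natural idea is to pick a ``fat Cantor-like'' approach: choose a subset $N\subset E$ with $0<|N|<|E|$ that is \emph{nowhere dense in $I$} (or at least such that $E\setminus N$ still meets every open subinterval of $I$ in positive measure), and set $\bar G:=I\setminus N$. Then $I\setminus E\subset\bar G$ automatically, and $|E\setminus\bar G|=|N|>0$. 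The point requiring care is the density condition: for every interval $(c,d)\subset I$ we need $|\bar G\cap(c,d)|=|(c,d)|-|N\cap(c,d)|>0$. This holds provided $N$ is chosen so that $|N\cap(c,d)|<|(c,d)|$ for all such intervals. Concretely, enumerate the rational subintervals of $I$ and, inside $E$, greedily remove from each a positive-measure but not full-measure chunk so that the accumulated removed set $N$ still has $|N\cap(c,d)|<|(c,d)|$ for every interval; this is possible by an exhaustion argument because $|E|>0$ lets us always find room. Then $\bar G\in\overline{\mathscr H}$, the associated $\bs\in\mathscr S$ (via Lemma~\ref{LM42}(2)) is not the natural scale, and Corollary~\ref{COR53} gives a proper Fukushima subspace.

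The main obstacle is the explicit construction in the converse: one needs a subset of $E$ of intermediate positive measure whose complement is still ``spread out'' enough to satisfy the measure-density condition in the definition of $\overline{\mathscr H}$. A clean way to finish is to note that when $|E|>0$, by the Lebesgue density theorem $E$ has a point of density $1$, hence contains, inside some small interval $(c_0,d_0)$, a measure close to $|(c_0,d_0)|$; removing a positive-measure portion of $E\cap(c_0,d_0)$ that is a Cantor-type set (nowhere dense, positive measure) yields $N$, and then $\bar G=I\setminus N$ trivially meets every subinterval in full-minus-nowhere-dense, hence positive, measure. This sidesteps the greedy enumeration entirely and makes the argument short.
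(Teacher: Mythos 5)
Your proposal is correct, and its overall skeleton is the one the paper uses: reduce, via Theorem~\ref{THM32}, Lemma~\ref{LM42} and Corollary~\ref{COR53}, to the purely measure-theoretic question of whether some $G\in\mathscr H$ has $|E\setminus G|>0$, and handle the direction $|E|=0$ exactly as you do. The difference lies in the construction for the converse. The paper splits into two cases: when $E$ has empty interior it takes $G=\emptyset$ (so $\bar G=I\setminus E$, which is measure-dense precisely because $E$ is nowhere dense), and when $E$ contains an interval $(c,d)$ it invokes a Borel set $G_0\subset(c,d)$ with $0<|G_0\cap J|<|J|$ for every subinterval $J$ (the ``both the set and its complement are measure-dense'' construction from Folland) and sets $G=G_0\cup(E\setminus(c,d))$. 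You instead give a single, case-free construction: pick a compact nowhere dense $N\subset E$ with $|N|>0$ (which exists whenever $|E|>0$, e.g.\ by taking a compact positive-measure subset and deleting a dense open set of small measure) and put $\bar G=I\setminus N$; nowhere density of $N$ immediately gives $|\bar G\cap(c,d)|>0$ for every interval, and $|E\setminus\bar G|=|N|>0$. This is arguably cleaner, since it subsumes the paper's two cases and needs only the standard fact about nowhere dense sets of positive measure rather than the stronger ``intermediate density in every subinterval'' set. Two small points to tidy up: the parenthetical attributing the absence of boundary subtleties to (QK) is off target (that hypothesis rules out killing inside and plays no role here — what you actually need is only that $G=E$ a.e.\ forces $\sF'=\sF$ via Proposition~\ref{PRO43}); and the phrase ``Cantor-type set'' should be replaced by the explicit compact-minus-dense-open argument, since $E\cap(c_0,d_0)$ need not contain an interval in which to run the usual Cantor construction.
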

\begin{proof}
If $E$ is of zero Lebesgue measure,  then so is $E\setminus G$ for any $G\in \mathscr H$.  In view of Corollary~\ref{COR53},  every Fukushima subspace is not a proper one. 

To the contrary,  suppose that $E$ is of positive Lebesgue measure and we are to obtain a proper Fukushima subspace of $(\sE,\sF)$.  When $E$ has empty interior,  we put $G=\emptyset$.  It is straightforward to verify that $G\in \mathscr H$ and $E\setminus G=E$ is of positive Lebesgue measure.  Hence Corollary~\ref{COR53} yields that the Fukushima subspace with characteristic set $\emptyset$ is a proper one.  When some open interval $(c,d)\subset E$,  we take a Borel subset $G_0\subset (c,d)$ such that $0<| G_0\cap J|<|d-c|$ for any subinterval $J$ of $(c,d)$.   The existence of such $G_0$ is referred to in,  e.g.,  \cite[\S1,  Exercise~33]{F99}.  Then put $G:=G_0\cup (E\setminus (c,d))$.  Again we have $G\in \mathscr H$ and $E\setminus G$ is of positive Lebesgue measure.  On account of Corollary~\ref{COR53},  $(\sE,\sF)$ still has proper Fukushima subspaces.  That completes the proof.  
\end{proof}

\subsection{Minimal element}

We can further explore the existence of \emph{minimal element} of $\mathfrak{S}$.  Recall that $\sF'\in \mathfrak{S}$ is a \emph{mimimal} (resp.  \emph{maximal}) \emph{element} of $\mathfrak{S}$ if the only $\sF''\in \mathfrak{S}$ satisfying $\sF'\supset \sF''$ (resp.  $\sF'\subset \sF''$) is $\sF'$ itself.  Particularly,  the minimal elements of $\mathfrak{S}$, as regular Dirichlet forms,  have no proper Fukushima subspaces.   Clearly $\mathfrak{S}$ admits a unique maximal element $\sF$,  while the minimal elements may not exist.  

\begin{theorem}\label{THM55}
$\mathfrak{S}$ admits a minimal element $\sF'$,  if and only if $E$ is nowhere dense,  i.e.  $E$ has empty interior.  In the meanwhile the minimal Fukushima subspace is unique,  whose characteristic set is the empty set,  and its strongly local part vanishes,  i.e.  for any $f,g\in \sF'$,  
\begin{equation}\label{eq:41}
	\sE'(f,g)=\frac{1}{2}\sum_{k\geq 1}\frac{(f(b_k)-f(a_k))(g(b_k)-g(a_k))}{b_k-a_k}.  
\end{equation}
\end{theorem}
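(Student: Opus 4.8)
The plan is to reduce everything to the already-established correspondence between Fukushima subspaces and characteristic sets (Theorem \ref{THM32}, Lemma \ref{LM42}, Proposition \ref{PRO43}). By Proposition \ref{PRO43}, the partially ordered set $\mathfrak{S}$ is order-isomorphic to the family $\mathscr H$ of characteristic sets ordered by inclusion, so a minimal element of $\mathfrak{S}$ exists precisely when $\mathscr H$ has a minimal element. Hence the whole statement becomes a question about the structure of $\mathscr H$: does there exist $G_0\in\mathscr H$ with $G_0\subset G$ (mod Lebesgue-null) for every $G\in\mathscr H$, or at least a $G_0$ admitting no strictly smaller member of $\mathscr H$? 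I would first record that $\emptyset\in\mathscr H$ if and only if $I\setminus E$ is dense in $I$, i.e. $E$ has empty interior; in that case $\emptyset$ is clearly the least element of $\mathscr H$, the corresponding scale function is the natural one restricted to $E$ (so $\bar\bs'=\mathbf 1_{I\setminus E}$), and plugging $\bs(b_k)-\bs(a_k)=b_k-a_k$ together with $d\bs\equiv 0$ on $E$ into \eqref{eq:23} gives exactly the form \eqref{eq:41}. That the associated form has no proper Fukushima subspace follows from Theorem \ref{THM54} since its state space $E$, viewed with the appropriate scale, carries no interior contribution — or more directly, any Fukushima subspace of $\sF'$ is a Fukushima subspace of $\sF$ with characteristic set contained in $\emptyset$, hence equal to $\sF'$.

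For the converse — if $E$ has nonempty interior then $\mathfrak{S}$ has no minimal element — I would argue by contradiction using the constructive device already employed in the proof of Theorem \ref{THM54}. Suppose $\sF'\in\mathfrak S$ is minimal, with characteristic set $G$. Since $E$ contains some open interval $(c,d)$ and $G\in\mathscr H$ forces $|G\cap J|>0$ for every subinterval $J\subset(c,d)$, the set $G\cap(c,d)$ has positive measure in every subinterval. I would then invoke the existence (cf. \cite[\S1, Exercise~33]{F99}) of a Borel set $G_0\subset G\cap(c,d)$ with $0<|G_0\cap J|<|G\cap J|$ for every subinterval $J\subset(c,d)$ — that is, a "fat-and-thin" subset of $G\cap(c,d)$ relative to the measure $\mathbf 1_G\,dx$. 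Setting $G':=G_0\cup\big(G\setminus(c,d)\big)$ one checks $G'\in\mathscr H$ (it still contains $I\setminus E$ and still meets every interval in positive measure) and $G'\subsetneq G$ with $|G\setminus G'|>0$; by Proposition \ref{PRO43} the corresponding $\sF''$ satisfies $\sF''\subsetneq\sF'$, contradicting minimality. This also shows the uniqueness claim: if a minimal element exists at all, $E$ must be nowhere dense, $\emptyset$ is then the only characteristic set that is minimal (any $G$ with $|G|>0$ admits a strictly smaller $G'$ by the same construction applied on any interval where $G$ has positive measure, noting that such an interval need not lie in $E$ — the argument uses only $|G\cap J|>0$ on subintervals $J$ of the chosen interval), and Proposition \ref{PRO43} forces the corresponding Fukushima subspace to be unique.

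The main obstacle I anticipate is the last parenthetical point in the uniqueness argument: when $E$ is nowhere dense but $|E|>0$ (a fat-Cantor-type situation), a characteristic set $G$ with $|G|>0$ need not contain any interval, so the "take an open interval inside" step from Theorem \ref{THM54} does not apply verbatim. I would handle this by choosing instead an interval $(c,d)$ in which $\bar G=G\cup(I\setminus E)$ has full measure is false — more precisely, pick any point of density-ambiguity of $\mathbf 1_{\bar G}$ and an interval around it where $0<|\bar G\cap(c,d)|$ and $|(c,d)\setminus\bar G|$ behaves suitably; since $\bar G\supset I\setminus E$ always, removing a positive-measure piece of $G\cap(c,d)$ from $\bar G$ keeps it measure-dense as long as we do not touch $I\setminus E$, which is automatic because $G_0\subset G\subset E$. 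So the construction of \cite[\S1, Exercise~33]{F99}, applied to the measure $\mathbf 1_G(x)\,dx$ restricted to $(c,d)$, still produces the required $G_0$; the only care needed is to verify measure-density of $G'$ on \emph{every} subinterval of $I$, which reduces to the estimate $|G'\cap J|\ge |G_0\cap J|>0$ for $J\subset(c,d)$ and $|G'\cap J|=|G\cap J|>0$ otherwise. Once this is in place the rest is bookkeeping via Proposition \ref{PRO43} and the representation \eqref{eq:23}.
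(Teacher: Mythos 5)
Your proposal is correct in substance. The forward direction (if $E$ is nowhere dense then $\emptyset\in\mathscr H$, the corresponding $\sF'$ is the least element of $\mathfrak S$ by Proposition~\ref{PRO43}, hence the unique minimal element, and \eqref{eq:41} is read off from \eqref{eq:23}) coincides with the paper's argument; only your phrase ``the natural one restricted to $E$'' is off --- the scale with characteristic set $\emptyset$ has $\bar\bs'=1_{I\setminus E}$, as your parenthetical correctly records. For the converse you take a genuinely different route. The paper maps the putative minimal subspace to natural scale via its own scale function $\bs$, observes that $\bs(E)\supset(\bs(c),\bs(d))$ has positive Lebesgue measure, and contradicts Theorem~\ref{THM54} using the homeomorphism-invariance of the Fukushima-subspace property from \cite{LY15}; you instead exhibit a strictly smaller characteristic set $G'\subsetneq G$ directly. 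Your route avoids the external invariance result but leans on one fact you should make explicit: the existence of a Borel $G_0\subset G\cap(c,d)$ with $0<|G_0\cap J|<|G\cap J|$ for every subinterval $J\subset(c,d)$ is obtained by transporting the standard Folland set through the distribution function $\phi(x):=|G\cap(c,x]|$, which is a homeomorphism of $(c,d)$ onto its image precisely because $(c,d)\subset E$ forces $|G\cap J|>0$ for all $J$. Finally, your anticipated ``main obstacle'' is a non-issue and the workaround you sketch for it is the one shaky point of the write-up: once $E$ is nowhere dense, $I\setminus E$ is open and dense, hence measure-dense by itself, so \emph{every} Borel subset of $E$ is a characteristic set and $\sF_\emptyset$ is the least element of $\mathfrak S$; uniqueness follows with no construction on fat-Cantor-type $G$, and your claim there that $|G\cap J|>0$ holds on all subintervals of an arbitrarily chosen interval meeting $G$ in positive measure is not justified in general. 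Delete that digression and the proof is complete.
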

\begin{proof}
If $E$ is nowhere dense,  then $\emptyset \in \mathscr H$.  On account of Proposition~\ref{PRO43},  the Fukushima subspace with characteristic set $\emptyset$ is actually the unique minimal element of $\mathfrak{S}$.  Meanwhile \eqref{eq:41} can be verified straightforwardly.  

To the contrary,  let $\sF'\in \mathfrak{S}$ be a minimal element with scale function $\bs$,  and we argue that $E$ is nowhere dense by contradiction.  Suppose an open interval $(c,d)\subset E$.  Then $\bs(c)<\bs(d)$ and the open interval $(\bs(c),\bs(d))$ is a subset of $\tilde{E}:=\{\bs(x):x\in E\}$.  Note that $\tilde{X}'_t:=\bs(X'_t)$,  $t\geq 0$,  is a quasidiffusion on $\tilde{E}$.   Since $(\sE',\sF')$ has no proper Fukushima subspaces and $\bs$ is a homeomorphism between $E$ and $\tilde{E}$,  it follows from \cite[Proposition~2.5]{LY15} that the associated Dirichlet form of $\tilde{X}'$ has no proper Fukushima subspaces.  This is a contradiction of Theorem~\ref{THM54} because $\tilde{E}\supset (\bs(c),\bs(d))$ is clearly of positive Lebesgue measure.  
\end{proof}
\begin{remark}
It is worth pointing out that $E$ of zero Lebesgue measure is certainly nowhere dense.  Meanwhile $\sF$,   the unique element of $\mathfrak{S}$,  is identified with the maximal and minimal ones.  
\end{remark}

\subsection{Examples}

 Let $K\subset [0,1]$ be a generalized Cantor set (see, e.g.,  \cite[page 39]{F99}) and $\mu$ be a fully supported finite measure on $K$.  Here below are some examples that have appeared in some literatures:
 \begin{itemize}
 \item[(1)] The quasidiffusion $X$ on $E:=K$ of positive Lebesgue measure with $\mu(dx)=1_K(x)dx$: This process was raised in \cite{LY17} to study the structure of Fukushima subspaces of one-dimensional Brownian motion.  In \cite[Theorem~2.1]{LY17},  the minimal Fukushima subspace is,  in fact,  put forward.  Furthermore the Dirichlet form of $X$ enjoys a strongly local part, while the minimal Fukushima subspace does not.  
 \item[(2)] The quasidiffusion $X$ on $E:=K$ of zero Lebesgue measure with $\mu$ being,  e.g.,  the induced measure of Cantor function: It was raised in \cite[Corollay~5.1]{LY19-2} to study Fukushima extensions of one-dimensional Brownian motion.  As indicated in Theorem~\ref{THM54},  $X$ has no proper Fukushima subspaces,  while \cite[Corollay~5.1]{LY19-2} tells us that it does have proper Fukushima extensions.  
 \item[(3)] The quasidiffusion $X$ on $E:=\cup_{n\in \bZ} (K+n)$,  where $K+n:=\{x+n: x\in K\}$,  is also called a \emph{Brownian motion on Cantor set} in,  e.g.,  \cite{BEPP08}.  It admits a unique minimal Fukushima subspace due to Theorem~\ref{THM55}.  Furthermore if $K$ is of zero Lebesgue measure,  then it has no proper Fukushima subspaces.  
 \end{itemize}

Some censored examples can also be raised based on these quasidiffusions.  For example,  we put $E=K\setminus \{0\}$,  $K\setminus \{1\}$ or $K\setminus \{0,1\}$ in the first example and then obtain a quasidiffusion,  which is absorbing at $0$ or $1$.  Clearly analogical results about its Fukushima subspaces can be still reached.    

\section{Extension to skip-free Hunt processes}\label{SEC6}

Finally we give some remarks on Fukushima subspaces of skip-free Hunt processes.  Let $X$ be a skip-free Hunt process on $E\in \overline{\mathscr K}$ with scale function $\bs$ and speed measure $\mu$.  Its associated Dirichlet form $(\sE,\sF)$ is expressed as \eqref{eq:24}.   Let $I$ be the interval as in \S\ref{SEC22} and $\bar{\bs}$ be the extended scale function of $\bs$.  The crucial fact is that $\tilde{X}_t:=\bs(X_t)$,  $t \geq 0$, is a skip-free Hunt process on $\tilde{E}=\bs(E)\in \mathscr K$ on its natural scale with speed measure $\tilde{\mu}=\mu\circ \bs^{-1}$.  In other words,  $\tilde{X}$ is a quasidiffusion on $\tilde E$.   Hence repeating the arguments in the previous sections to $\tilde{X}$ and noting that $\bs: E\rightarrow \tilde{E}$ is a homeomorphism,  one may obtain analogous results for Fukushima subspaces of $(\sE,\sF)$.  We summarize them as a theorem below and omit its proof. 

\begin{theorem}
Let $(\sE,\sF)$ be as above.  Denote the family of all Fukushima subspaces of $(\sE,\sF)$ by $\mathfrak{S}_\bs:=\{\sF': (\sE',\sF')\text{ is a Fukushima subspace of }(\sE,\sF)\}$.  
\begin{itemize}
\item[\rm (1)] Every Fukushima subspace of $(\sE,\sF)$ corresponds to a skip-free Hunt process with scale function $\mathfrak{s} \in \mathscr S_\bs$ and speed measure $\mu$,  where
\[
\begin{aligned}
	\mathscr S_\bs&:=\bigg\{\mathfrak{s}=\bar{\mathfrak{s}}|_E: \bar{\mathfrak{s}}\text{ is strictly increasing on }I,  d\bar{\mathfrak{s}}\ll d\bar\bs  \\
	&\qquad\qquad\qquad \frac{d\bar{\mathfrak{s}}}{d\bar\bs}=0\text{ or }1,d\bar\bs\text{-a.e.  and }  \frac{d\bar{\mathfrak{s}}}{d\bar\bs}=1\text{ on }I\setminus E\bigg\}.  
\end{aligned}\] 
\item[\rm (2)] $\mathscr S_\bs$ corresponds to the family of characteristic sets 
\[
\mathscr H_\bs:=\left\{\bar{G}_\bs\cap E: I\setminus E\subset \bar{G}_\bs\subset I,  \lambda_{\bar{\bs}}(\bar{G}_\bs\cap (c,d))>0\text{ for }(c,d)\subset I\right\}.  
\]
More precisely,  the characteristic set of $\mathfrak{s}\in \mathscr S_\bs$ is 
\[
G_\bs=\left\{x\in E: \frac{d\bar{\mathfrak{s}}}{d\bar{\bs}}(x)=1\right\}.
\]  
\item[\rm (3)] Let $\sF'\in \mathfrak{S}_\bs$ with characteristic set $G_\bs$.  Then $\sF'\neq \sF$ if and only if $E\setminus G_\bs$ is of positive $\lambda_{\bs}$-measure.  
\item[\rm (4)] $(\sE,\sF)$ has no proper Fukushima subspaces,  if and only if $\lambda_\bs(E)=0$.
\item[\rm (5)] $\mathfrak{S}_\bs$ admits a minimal element,  if and only if $E$ is nowhere dense.  Meanwhile the minimal Fukushima subspace is unique,  whose characteristic set is the empty set,  and its strongly local part vanishes.  
\end{itemize}
\end{theorem}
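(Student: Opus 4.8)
The proof is explicitly deferred to the reader in the excerpt, and the recipe is clearly laid out: transfer every statement for quasidiffusions to a general skip-free Hunt process $X$ with scale function $\bs$ by applying the homeomorphism $\bs : E \to \tilde E$. So my plan mirrors that remark precisely.

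\medskip

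\noindent\textbf{Proof proposal.}
The plan is to reduce everything to the quasidiffusion case via the map $y = \bs(x)$. First I would record the transfer principle: by \S\ref{SEC23}, $\tilde X_t := \bs(X_t)$ is a quasidiffusion on $\tilde E = \bs(E) \in \mathscr K$ with natural scale and speed measure $\tilde\mu = \mu \circ \bs^{-1}$, whose Dirichlet form $(\tilde\sE, \tilde\sF)$ on $L^2(\tilde E, \tilde\mu)$ is \eqref{eq:31} for the set $\tilde E$. Since $\bs$ is a homeomorphism of $E$ onto $\tilde E$, composition $f \mapsto f \circ \bs^{-1}$ is a unitary isomorphism $L^2(E,\mu) \to L^2(\tilde E, \tilde\mu)$ that carries $(\sE,\sF)$ isometrically onto $(\tilde\sE,\tilde\sF)$; hence it induces a bijection between $\mathfrak S_\bs$ and $\mathfrak S$ (the family of Fukushima subspaces of the quasidiffusion $\tilde X$) preserving inclusion, properness, and minimality. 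This is the step on which everything rests, and it is routine once one notes that $f \ll \bar{\bs}$ with $df/d\bar{\bs} \in L^2(I, \lambda_{\bar\bs})$ translates exactly into $f\circ\bar{\bs}^{-1}$ being in $H^1_e$ on $\bar{\bs}(I)$, and that the jump term in \eqref{eq:24} is scale-invariant under the definition $\bar{\bs}(b_k)-\bar{\bs}(a_k) = \bs(b_k)-\bs(a_k)$.

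\medskip

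\noindent Next I would push each of (1)--(5) through this correspondence. For (1): by Theorem~\ref{THM32} applied to $\tilde X$, every Fukushima subspace of $(\tilde\sE,\tilde\sF)$ has scale function $\tilde{\mathfrak s} \in \mathscr S$ (the family \eqref{eq:38-2} for $\tilde E$) and speed measure $\tilde\mu$; pulling back along $\bs$, the composition $\mathfrak s := \tilde{\mathfrak s} \circ \bs$ has extended version $\bar{\mathfrak s} = \bar{\tilde{\mathfrak s}} \circ \bar{\bs}$, and the conditions $\bar{\tilde{\mathfrak s}}' \in \{0,1\}$ a.e., $\bar{\tilde{\mathfrak s}}' = 1$ on $\tilde I \setminus \tilde E$ become exactly $d\bar{\mathfrak s}/d\bar{\bs} \in \{0,1\}$ $d\bar{\bs}$-a.e.\ and $d\bar{\mathfrak s}/d\bar{\bs} = 1$ on $I \setminus E$ by the chain rule for Radon--Nikodym derivatives. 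That is the stated $\mathscr S_\bs$. For (2): Lemma~\ref{LM42} and Proposition~\ref{PRO43} for $\tilde X$ give the family $\mathscr H$ of characteristic sets $\{x \in \tilde E : \tilde{\mathfrak s}'(x) = 1\}$ and the inclusion-preservation; transporting, $\{x : d\bar{\mathfrak s}/d\bar{\bs}(x) = 1\}$ has image under $\bs$ lying in $\mathscr H$, and the density condition $|\bar G \cap (c,d)| > 0$ for $\bar G \subset \tilde I$ becomes $\lambda_{\bar{\bs}}(\bar G_\bs \cap (c,d)) > 0$ since $\lambda_{\bar{\bs}}$ is the pullback of Lebesgue measure. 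For (3): Corollary~\ref{COR53} says $\tilde{\sF'} \ne \tilde\sF$ iff $\tilde E \setminus \tilde G$ has positive Lebesgue measure; since $\lambda_{\bs}$ is $\bs$-pullback of Lebesgue measure on $\tilde E$, this is $\lambda_{\bs}(E \setminus G_\bs) > 0$. For (4): Theorem~\ref{THM54} gives ``no proper Fukushima subspaces iff $\tilde E$ has zero Lebesgue measure'', i.e.\ $\lambda_{\bs}(E) = 0$. For (5): Theorem~\ref{THM55} gives ``$\mathfrak S$ has a minimal element iff $\tilde E$ is nowhere dense''; since $\bs$ is a homeomorphism, $\tilde E$ is nowhere dense iff $E$ is, and the minimal element, its characteristic set being $\emptyset$, and the vanishing of its strongly local part all transport verbatim.

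\medskip

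\noindent The only point requiring a little care — and the one I would call the main (mild) obstacle — is verifying that the measure-theoretic bookkeeping under the change of variables $y = \bs(x)$ is consistent throughout: that $d\bar{\mathfrak s}/d\bar{\bs}$ is well defined $d\bar{\bs}$-a.e.\ as a Radon--Nikodym derivative, that the chain rule $\overline{\tilde{\mathfrak s}}{}' \circ \bar{\bs} \cdot \bar{\bs}' = \bar{\mathfrak s}'$ holds a.e.\ with respect to Lebesgue measure and translates into the $d\bar{\bs}$-a.e.\ statement, and that ``$\lambda_\bs$-null'' on $E$ corresponds to ``Lebesgue-null'' on $\tilde E$. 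None of this is deep, but it is what makes the clean statements (3)--(5) about $\lambda_\bs$ (rather than Lebesgue measure on $E$) correct. Once these identifications are in place, each clause is a one-line consequence of the corresponding result in \S\S\ref{SEC3}--\ref{SEC5}, and the proof is complete; accordingly it is reasonable to omit the details as indicated.
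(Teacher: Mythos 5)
Your proposal is correct and follows exactly the route the paper intends: the paper omits the proof precisely because it reduces to applying the results of \S\S3--5 to the quasidiffusion $\tilde X_t=\bs(X_t)$ and transporting everything back along the homeomorphism $\bs:E\to\tilde E$, which is what you carry out. The measure-theoretic bookkeeping you flag (pullback of Lebesgue measure to $\lambda_\bs$, chain rule for the Radon--Nikodym derivatives, characteristic sets as $\lambda_\bs$-a.e.\ classes) is the same point the paper acknowledges in its closing remark.
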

\begin{remark}
We should emphasis that to be rigorous,  the characteristic set $G_\bs$ should be treated as a $\lambda_\bs$-a.e.  equivalence class.  
\end{remark}




\bibliographystyle{siam} 
\bibliography{FukSub} 

\end{document}